\newcommand{\abs}[1]{\left|#1\right|}
\newcommand{\bdry}[1]{\partial #1}
\newcommand{\bgset}[1]{\big\{#1\big\}}
\newcommand{\closure}[1]{\overline{#1}}
\newcommand{\dint}{\ds{\int}}
\newcommand{\ds}[1]{\displaystyle #1}
\newcommand{\eps}{\varepsilon}
\newcommand{\incl}{\subset}
\newcommand{\norm}[2][]{\left\|#2\right\|_{#1}}
\newcommand{\pnorm}[2][]{\if #1'' \left|#2\right| \else \left|#2\right|_{#1} \fi}
\newcommand{\QED}{\mbox{\qedhere}}
\newcommand{\restr}[2]{\left.#1\right|_{#2}}
\newcommand{\set}[1]{\left\{#1\right\}}
\newcommand{\A}{{\cal A}}
\newcommand{\C}{{\cal C}}
\newcommand{\D}{{\cal D}}
\newcommand{\F}{{\cal F}}
\newcommand{\M}{{\cal M}}
\newcommand{\R}{\mathbb R}
\newcommand{\RP}{\R \text{P}}
\newcommand{\N}{\mathbb N}
\newcommand{\Z}{\mathbb Z}
\newcommand\meas{{\rm meas}}
\newcommand{\J}{{\cal J}}
\DeclareMathOperator{\dist}{dist}
\DeclareMathOperator{\divg}{div}
\DeclareMathOperator*{\esssup}{ess\; sup}
\newenvironment{properties}[1]{\begin{enumerate}

}{\end{enumerate}}
\newtheorem{corollary}{Corollary}[section]
\newtheorem{theorem}[corollary]{Theorem}
\newtheorem{lemma}[corollary]{Lemma}
\newtheorem{proposition}[corollary]{Proposition}
\theoremstyle{definition}
\newtheorem{remark}[corollary]{Remark}
\numberwithin{equation}{section}
\begin{document}

\title{\bf Multiple solutions\\
 for $p$-Laplacian type problems\\
with asymptotically $p$-linear terms\\
via a cohomological index theory}
\author{{\bf\large A.M. Candela\footnote{The authors acknowledge the support
of Research Funds PRIN2009 and {\sl Fondi d'Ateneo 2010}.} $^{,1}$, G. Palmieri$^{*,2}$,
K. Perera\footnote{This work was done while the third--named
author was visiting the {\sl Dipartimento di Matematica} at the
{\sl Universit\`a degli Studi di Bari},
and he is grateful for the kind hospitality of the department.}}\\
{\small $^*$Dipartimento di Matematica}\\
{\small Universit\`a degli Studi di Bari ``Aldo Moro''} \\
{\small Via E. Orabona 4, 70125 Bari, Italy}\\
{\small \it $^1$annamaria.candela@uniba.it, $^2$giuliana.palmieri@uniba.it}
\vspace{1mm}\\
{\small $^\dagger$Department of Mathematical Sciences}\\
{\small Florida Institute of Technology}\\
{\small 150 W. University Blvd, Melbourne, FL 32901, USA}\\
{\small \it kperera@fit.edu}
\vspace{1mm}}
\date{}

\maketitle

\begin{abstract}
The aim of this paper is investigating the existence of weak solutions of the quasilinear
elliptic model problem
\[
\left\{
\begin{array}{lr}
- \divg (A(x,u)\, |\nabla u|^{p-2}\, \nabla u) + \dfrac1p\, A_t(x,u)\, |\nabla u|^p\  =\  f(x,u) & \hbox{in $\Omega$,}\\
u\ = \ 0 & \hbox{on $\partial\Omega$,}
\end{array}
\right.
\]
where $\Omega \subset \R^N$ is a bounded domain, $N\ge 2$,
$p > 1$, $A$ is a given function which admits partial derivative $A_t(x,t) = \frac{\partial A}{\partial t}(x,t)$
and $f$ is asymptotically $p$-linear at infinity.

Under suitable hypotheses both at the origin and at infinity, and
if $A(x,\cdot)$ is even while $f(x,\cdot)$ is odd, by using
variational tools, a cohomological index theory and a related pseudo--index
argument, we prove a multiplicity result if $p > N$ in the non--resonant case.
\end{abstract}

\noindent
{\it \footnotesize 2000 Mathematics Subject Classification}. {\scriptsize 35J35, 35J60, 35J92, 47J30, 58E05}.\\
{\it \footnotesize Key words}. {\scriptsize $p$-Laplacian type equation, asymptotically $p$-linear problem,
Palais--Smale condition, cohomological index theory, pseudo--index theory}.


\section{Introduction} \label{secintroduction}

Let us consider the $p$-Laplacian type equation
\[
(P)\qquad
\left\{
\begin{array}{lr}
- \divg (A(x,u)\, |\nabla u|^{p-2}\, \nabla u) + \dfrac1p\, A_t(x,u)\, |\nabla u|^p\  =\  f(x,u) & \hbox{in $\Omega$,}\\
u\ = \ 0 & \hbox{on $\partial\Omega$,}
\end{array}
\right.
\]
where $\Omega \subset \R^N$ is a bounded domain, $N\ge 2$,
$p > 1$, $A$, $f : \Omega \times \R \to \R$
are given functions such that the partial derivative $A_t(x,t) = \frac{\partial A}{\partial t}(x,t)$
exists for a.e. $x\in \Omega$, all $t \in \R$.

If we set $F(x,t) = \int_0^t f(x,s) ds$, we can associate with problem $(P)$
the functional $\J : \D \subset W^{1,\, p}_0(\Omega) \to \R$ defined by
\begin{equation} \label{gei0}
\J(u)\ =\ \frac1p\ \int_\Omega A(x,u)\ |\nabla u|^p\, dx\ -\  \int_\Omega F(x,u)\ dx.
\end{equation}

In general, if no growth assumption is made on $A$ with respect to $t$,
the natural domain $\D$ of $\J$ is contained in, but is not equal to, the
Sobolev space $W^{1,\, p}_0(\Omega)$. Anyway, under the assumptions
\begin{description}{}{}
\item[$(H_0)$]
$A$, $A_t$ are Carath\'eodory functions on $\Omega\times\R$ such that
\[
\sup_{|t| \le r} |A(\cdot,t)| \in L^\infty(\Omega),\qquad
\sup_{|t| \le r} |A_t(\cdot,t)| \in L^\infty(\Omega)\qquad \hbox{for any $r > 0$;}
\]
\item[$(h_0)$]
$f$ is a Carath\'eodory function on $\Omega\times\R$ such that
\[
\sup_{|t| \le r} |f(\cdot,t)| \in L^\infty(\Omega)\qquad\hbox{for any $r > 0$,}
\]
\end{description}
the functional $\J$ is surely well-defined on the Banach space
\begin{equation}\label{space}
X := W^{1,\, p}_0(\Omega) \cap L^\infty(\Omega),\qquad
\|u\|_X = \|u\| + |u|_\infty,
\end{equation}
with
\[
\|u\|^p\ =\ \int_\Omega |\nabla u|^p\, dx, \quad
|u|_{\infty}\ =\ \esssup_{x\in\Omega} |u(x)|,
\]
and, for any $u, v \in X$, its G\^ateaux derivative with respect to $u$ in the direction $v$ is given by
\[
\begin{split}
\langle d\J(u),v\rangle\ =\ &\int_\Omega A(x,u)\ |\nabla u|^{p-2}\ \nabla u \cdot \nabla v\ dx\\
&+\ \frac1p\ \int_\Omega A_t(x,u)\ |\nabla u|^p\ v\ dx\ -\ \int_\Omega f(x,u)\ v\ dx.
\end{split}
\]

As our aim is investigating the existence of weak solutions of $(P)$
when it is an {\sl asymptotically $p$-linear} elliptic problem, we assume that $A$ and $f$ satisfy
the following hypotheses:
\begin{description}{}{}
\item[$(H_1)$] there exists $\alpha_0 > 0$ such that
\[
A(x,t) \ge \alpha_0 \quad \text{a.e. in } \Omega, \text{ for all } t \in \R;
\]
\item[$(H_2)$] there exists $A^\infty \in L^\infty(\Omega)$ such that
\[
\lim_{|t| \to + \infty}\, A(x,t) = A^\infty(x) \quad \text{uniformly a.e. in } \Omega;
\]
\item[$(h_1)$] there exist $\lambda^\infty \in \R$ and a (Carath\'eodory) function
$g^\infty : \Omega \times \R \to \R$ such that
\[
f(x,t)\ =\ \lambda^\infty\ |t|^{p-2}\ t + g^\infty(x,t),
\]
where
\begin{equation}\label{suglim}
\lim_{|t| \to + \infty}\, \frac{g^\infty(x,t)}{|t|^{p-1}} = 0 \quad \text{uniformly a.e. in } \Omega.
\end{equation}
\end{description}

As $\J$ is a $C^1$-functional on $X$ under these hypotheses
(see Proposition \ref{smooth}), we can seek weak solutions of $(P)$ by means
of variational tools.

In the asymptotically linear
case, i.e. under the hypotheses $(h_0)$ and $(h_1)$, a variational approach was first used
for $p=2$ and $A(x,t)\equiv 1$ (see the seminal papers \cite{az,BBF}).
On the contrary, only a few results have been obtained when $p\ne 2$, but always for
$A(x,t)\equiv 1$ or, at worst, for $A(x,t) = A(x)$ independent of $t$
(see \cite{ag,ao,BCS,cm,dr,ll,lz,lz1,PAO,ps}). In fact,
when $p > 1$ is arbitrary, the main difficulty is that,
while the structure of the spectrum of $-\Delta$ in $H^{1}_0(\Omega)$
is known, the full spectrum of $-\Delta_p$ is still unknown,
even though various authors have introduced different
characterizations of eigenvalues and definitions of quasi--eigenvalues.

Clearly, the same problem arises in our setting when $A(x,t)$ depends on $t$.
Furthermore, we have difficulties with the Palais--Smale condition as well,
and have to consider the asymptotic behavior, both at the origin and at infinity,
not only of the term $f(x,t)$, but also of the coefficient $A(x,t)$.

When $(h_1)$ is replaced with different conditions at infinity, weaker versions
of the Palais--Smale condition hold for arbitrary $p > 1$, and the existence
of critical points of $\J$ in $X$ have been proved (see \cite{CP,CP4}). However,
these approaches do not distinguish between different critical points at the same
critical level (see \cite{CP2,CP3}), and therefore, up to now, multiplicity results via a
cohomological index theory have been obtained only for $p>N$ (see \cite{CMPP,CPP}).
In fact, in this case the Sobolev Imbedding Theorem implies $X = W^{1,p}_0(\Omega)$ and
the classical Cerami's variant of the Palais--Smale condition can be verified.

In this paper, we will prove a multiplicity result for problem $(P)$ when $p>N$
and $f(x,t)$ is asymptotically $p$-linear at infinity.
To this aim, by considering some sequences of eigenvalues defined
by means of the cohomological index, we will
prove the classical Palais--Smale condition and, by means of a cohomological
index theory and a related pseudo--index argument,
we will extend the result in \cite{ps} to our setting (see \cite{CP13}
for a result obtained by using the approach in \cite{BBF}).
In particular, let us point out that, if the coefficient $A$
depends on $t$, the boundedness of each Palais--Smale sequence of $\J$
requires a careful proof also in the non--resonant assumption,
unlike the $t$--independent case (see Proposition \ref{PSbound}).


\section{Abstract tools} \label{secabstract}

The aim of this section is to recall the abstract tools
we need for the proof of our main result. Hence,
let $({\cal B},\|\cdot\|_{\cal B})$ be a Banach space with dual space
$({\cal B}',\|\cdot\|_{\cal B'})$ and let $J \in C^1({\cal B},\R)$.

Furthermore, fixing a level $\beta \in \R$, a point $u_0 \in {\cal B}$, a
set $\C \subset {\cal B}$ and a radius $r > 0$,
let us denote
\begin{itemize}
\item $K^J = \{u \in {\cal B}:\ dJ(u) = 0\}$
the set of critical points of $J$ in ${\cal B}$;
\item $K^J_\beta = \{u \in {\cal B}:\ J(u) = \beta,\ dJ(u) = 0\}$
the set of critical points of $J$ in ${\cal B}$ at the level $\beta$ (clearly,
$K^J_\beta = \emptyset$ if $\beta$ is a regular value);
\item $J^\beta = \{u\in {\cal B}:\ J(u) \le \beta\}$
the sublevel set of $J$ associated with $\beta$;
\item $B^{\cal B}_r(u_0)\ =\ \{u \in {\cal B}:\ \|u - u_0\|_{\cal B}\ \le\ r\}$
the closed ball in ${\cal B}$ centered at $u_0$ of radius $r$, with boundary
$\partial B^{\cal B}_r(u_0)$;
\item $\displaystyle {\rm dist}_{\cal B}(u,{\C})\ =\ \inf_{v \in \C} \|v - u\|_{\cal B}$
the distance from $\C$ to $u \in {\cal B}$.
\end{itemize}

We say that a sequence $(u_n)_n\subset {\cal B}$ is a {\sl Palais--Smale sequence at the level $\beta$},
brieftly a {\sl $(PS)_\beta$--sequence}, if
\[
J(u_n) \to \beta  \quad \hbox{and}\quad \|dJ(u_n)\|_{{\cal B}'} \to 0\qquad \mbox{as $\ n\to+\infty$.}
\]
The functional $J$ satisfies the {\sl Palais--Smale condition at the level $\beta$} in ${\cal B}$,
{\sl $(PS)_\beta$ condition} for short, if every $(PS)_\beta$--sequence admits a subsequence that
converges in ${\cal B}$.

Now, we assume that $J$ is even and $J(0) = 0$, and use the $\Z_2$-cohomological
index of Fadell and Rabinowitz in \cite{FR} and the associated pseudo-index of
Benci in \cite{B} to obtain multiple critical points.

Let us first recall the definition and some basic properties of the cohomological index.

Let $\A$ be the class of symmetric subsets of ${\cal B} \setminus \set{0}$. For $A \in \A$, we denote by
\begin{itemize}
\item $\; \overline{A} = A/\Z_2$ the quotient space of $A$ with each $u$ and $-u$ identified,
\item $\; f : \overline{A} \to \RP^\infty$ the classifying map of $\overline{A}$,
\item $\; f^\ast : H^\ast(\RP^\infty) \to H^\ast(\overline{A})$ the induced homomorphism of
the Alexander--Spanier cohomology rings.
\end{itemize}
Then the cohomological index of $A$ is defined by
\[
i(A)\ =\ \begin{cases}
\sup \bgset{m \ge 1 : f^\ast(\omega^{m-1}) \ne 0} &\hbox{if}\, A \ne \emptyset,\\[3pt]
0 &\hbox{if}\, A = \emptyset,
\end{cases}
\]
where $\omega \in H^1(\RP^\infty)$ is the generator of the polynomial
ring $H^\ast(\RP^\infty) = \Z_2[\omega]$.

For example, if $S^{n-1}$ is the unit sphere in $\R^n$, $\ 1 \le n < +\infty$,
then $i(S^{n-1}) = n$ as the classifying map of $S^{n-1}$
is the inclusion $\RP^{n-1} \incl \RP^\infty$,
which induces isomorphisms on $H^q$ for $q \le n-1$.

\begin{proposition}[Fadell--Rabinowitz \cite{FR}]
The index $i : \A \to \N \cup \set{0,+\infty}$ has the following properties:
\begin{properties}{i}
\item \label{i1} Definiteness: $i(A) = 0$ if and only if $A = \emptyset$;
\item \label{i2} Monotonicity: If there is an odd continuous map from $A$ to $B$
(in particular, if $A \subset B$), then $i(A) \le i(B)$.
Thus, equality holds when the map is an odd homeomorphism;
\item \label{i3} Dimension: $i(A) \le \dim {\cal B}$;
\item \label{i4} Continuity: If $A \in \A$ is closed, then there is a closed neighborhood
$N \in \A$ of $A$ such that $i(N) = i(A)$. When $A$ is compact, $N$ may be chosen to be a
$\delta$-neighborhood $N_\delta(A) = \bgset{u \in {\cal B} : \dist_{\cal B}(u,A) \le \delta}$;
\item \label{i5} Subadditivity: If $A, B \in \A$ are closed, then $i(A \cup B) \le i(A) + i(B)$;
\item \label{i6} Stability: If $SA$ is the suspension of $A \ne \emptyset$, obtained as the
quotient space of $A \times [-1,1]$ with $A \times \set{1}$ and $A \times \set{-1}$ collapsed
to different points, then $i(SA) = i(A) + 1$;
\item \label{i7} Piercing property: If $A, A_0, A_1$ are closed
and $\varphi : A \times [0,1] \to A_0 \cup A_1$ is a continuous mapping
such that $\varphi(-u,t) = - \varphi(u,t)$ for all $(u,t) \in A \times [0,1]$,
$\varphi(A \times [0,1])$ is closed, $\varphi(A \times \set{0}) \subset A_0$,
and $\varphi(A \times \set{1}) \subset A_1$, then $i(\varphi(A \times [0,1]) \cap A_0 \cap A_1) \ge i(A)$;
\item \label{i8} Neighborhood of zero: If $U$ is a bounded closed symmetric neighborhood of $0$,
then $i(\bdry{U}) = \dim {\cal B}$.
\end{properties}
\end{proposition}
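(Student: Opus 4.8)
The plan is to derive each property from standard facts about Alexander--Spanier (\v{C}ech) cohomology and from the naturality of the classifying map, following Fadell and Rabinowitz \cite{FR}. The structural observation used throughout is that, for $A\in\A$ closed, the classifying map $f_{\overline A}:\overline A\to\RP^\infty$ of the double cover $A\to\overline A$ is unique up to homotopy, hence compatible with inclusions of symmetric sets and, more generally, an odd continuous map $g:A\to B$ descends to $\overline g:\overline A\to\overline B$ with $f_{\overline B}\circ\overline g\simeq f_{\overline A}$; in particular, when the double cover is trivial the classifying map is null-homotopic. Granting this and the usual apparatus for \v{C}ech cohomology of closed sets (cup products, long exact sequences of pairs, excision, Mayer--Vietoris, tautness), several items are immediate. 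Property \ref{i1} is the definition, since $f_{\overline A}^\ast(\omega^0)=1\neq 0$ for $A\neq\emptyset$. For \ref{i2}, the identity $f_{\overline A}^\ast=\overline g^\ast\circ f_{\overline B}^\ast$ gives $f_{\overline B}^\ast(\omega^{m-1})\neq 0$ whenever $f_{\overline A}^\ast(\omega^{m-1})\neq 0$, so $i(A)\le i(B)$, with equality for an odd homeomorphism. For \ref{i3}, if $\dim{\cal B}=n<+\infty$ the classifying map factors up to homotopy through $(\R^n\setminus\set{0})/\Z_2\simeq\RP^{n-1}$, and $\omega^{m-1}$ vanishes in $H^{m-1}(\RP^{n-1})$ once $m-1\ge n$; the bound is vacuous when $\dim{\cal B}=+\infty$. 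For \ref{i8}, the normalization $u\mapsto u/\|u\|_{\cal B}$ is an odd continuous map from $\bdry U$ onto the unit sphere whose induced map has degree one because $\bdry U$ separates $0$ from infinity, so \ref{i2}, \ref{i3} and $i(S^{n-1})=n$ give $i(\bdry U)=\dim{\cal B}$ (the infinite-dimensional case being analogous).

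Two more properties are short once the right input is invoked. Property \ref{i4} is tautness: for $A$ closed one has $H^\ast(A)=\varinjlim H^\ast(N)$ over closed symmetric neighborhoods $N$ of $A$, so if $m=i(A)$ then the vanishing $f_{\overline A}^\ast(\omega^m)=0$ is already realized on some $N$, giving $i(N)\le m$, while \ref{i2} gives $i(N)\ge m$; when $A$ is compact the neighborhoods $N_\delta(A)$ are cofinal and may be used. For \ref{i6}, one realizes $\overline{SA}$ as the mapping cone of the covering projection $p:A\to\overline A$ (using the free involution $(u,t)\mapsto(-u,-t)$ on $SA$) and combines the long exact sequence of the pair $(\overline{SA},\overline A)$ with the Gysin sequence of the double cover $SA\to\overline{SA}$: since $p$ classifies a trivial cover and $A\incl SA$ is null-homotopic, one reads off that $f^\ast(\omega^{i(A)})\neq 0$ while $f^\ast(\omega^{i(A)+1})=0$ in $H^\ast(\overline{SA})$, i.e. $i(SA)=i(A)+1$.

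The genuinely delicate properties are subadditivity \ref{i5} and the piercing property \ref{i7}, and this is where the real work lies. For \ref{i5}, put $m=i(A)$, $n=i(B)$, $X=\overline{A\cup B}$ and $W=f_X^\ast(\omega)$. Since $f_X$ restricts to the classifying maps of $\overline A$ and of $\overline B$, the classes $W^m$ and $W^n$ die on $\overline A$ and on $\overline B$, so by the long exact sequences of the pairs they lift to $\alpha\in H^m(X,\overline A)$ and $\beta\in H^n(X,\overline B)$; the relative cup product $\alpha\smile\beta$ lies in $H^{m+n}(X,\overline A\cup\overline B)=H^{m+n}(X,X)=0$ and maps to $W^{m+n}$, whence $W^{m+n}=0$ and $i(A\cup B)\le m+n$. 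The care needed is that the relative cup product and the excisions are legitimate for the \v{C}ech cohomology of these closed pairs. For \ref{i7}, one first reduces: $\varphi$ restricts to an odd continuous map from $D:=\varphi^{-1}(A_0\cap A_1)$ into $\varphi(A\times[0,1])\cap A_0\cap A_1$, so by \ref{i2} it suffices to show $i(D)\ge i(A)$. Write $Y=A\times[0,1]$ with the involution on the first factor, $E_j=\varphi^{-1}(A_j)$ (closed, symmetric, with $E_0\cup E_1=Y$ and $E_0\cap E_1=D$), and $m=i(A)$; the projection $Y\to A$ gives $f_{\overline Y}^\ast(\omega^{m-1})\neq 0$, while the two sections $s_0,s_1:\overline A\to\overline Y$ at the endpoints are homotopic in $\overline Y$ and factor through $\overline{E_0}$, $\overline{E_1}$ respectively. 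If $f_{\overline Y}^\ast(\omega^{m-1})$ restricted to $\overline D$ were zero, the Mayer--Vietoris decomposition $\overline Y=\overline{E_0}\cup\overline{E_1}$ would write $f_{\overline Y}^\ast(\omega^{m-1})=\eta_0+\eta_1+\zeta$ with $\eta_j$ vanishing on $\overline{E_{1-j}}$ and $\zeta$ vanishing on both $\overline{E_0}$ and $\overline{E_1}$; evaluating $\eta_0$ on $\overline A$ through the two homotopic sections then yields $f_{\overline A}^\ast(\omega^{m-1})$ via $s_0$ but $0$ via $s_1$, a contradiction. Hence $f_{\overline D}^\ast(\omega^{m-1})\neq 0$ and $i(D)\ge m$. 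Tracking precisely which Mayer--Vietoris and relative terms occur, and verifying the naturality of the covering data under $\varphi$, is the main obstacle; the complete argument is carried out in \cite{FR}.
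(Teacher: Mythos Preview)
The paper does not give a proof of this proposition at all: it is stated as background, attributed to Fadell--Rabinowitz \cite{FR}, and used as a black box throughout. So there is no ``paper's own proof'' to compare against; what you have written is essentially a condensed version of the arguments the paper is citing.

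That said, your sketch is broadly faithful to the standard cohomological proofs in \cite{FR}: naturality of classifying maps for \ref{i2}, factoring through $\RP^{n-1}$ for \ref{i3}, tautness of Alexander--Spanier cohomology for \ref{i4}, the relative cup product trick for \ref{i5}, and the reduction of \ref{i7} to the preimages $E_j=\varphi^{-1}(A_j)$ inside $A\times[0,1]$. Two places deserve a caveat. In \ref{i6}, identifying $\overline{SA}$ with the mapping cone of $p:A\to\overline A$ and then reading off the index shift is correct in spirit, but the step ``$p$ classifies a trivial cover'' is false as stated ($p$ is precisely the nontrivial double cover); what one actually uses is the Gysin/long exact sequence of the double cover $SA\to\overline{SA}$ together with the contractibility of each cone, which forces the extra power of $\omega$ to survive. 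In \ref{i7}, the decomposition $f_{\overline Y}^\ast(\omega^{m-1})=\eta_0+\eta_1+\zeta$ is not literally what Mayer--Vietoris provides; the honest argument runs through the exact sequence of the triad $(\overline Y;\overline{E_0},\overline{E_1})$ and the observation that $\overline Y\setminus\overline{D}$ splits as the disjoint union of the open sets $\overline Y\setminus\overline{E_0}$ and $\overline Y\setminus\overline{E_1}$, which lets one separate the relative class and derive a contradiction from the two homotopic sections $s_0,s_1$. You already flag that the full details are in \cite{FR}, which is exactly what the paper does too.
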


For any integer $k \ge 1$, let
\[
\A_k = \bgset{A \in \A : A \text{ is compact and } i(A) \ge k}
\]
and set
\[
c_k := \inf_{A \in \A_k}\, \max_{u \in A}\, J(u).
\]
Since $\A_{k+1} \subset \A_k$, then $c_k \le c_{k+1}$.
Furthermore, for any $k$-dimensional subspace $V$ of ${\cal B}$
and $\delta > 0$, by \ref{i8} we have $\bdry{B^{\cal B}_\delta(0)} \cap V \in \A_k$,
while by continuity it results
\[
\sup_{u \in \bdry{B^{\cal B}_\delta(0)}} J(u)\ \to\ J(0)\quad
\hbox{as $\; \delta \to 0\ $},
\]
so $c_k \le J(0)$.

The following theorem is standard (see, e.g., \cite[Proposition 3.36]{PAO}).

\begin{theorem} \label{Theorem 2.4}
Assume that $J \in C^1({\cal B},\R)$ is even and $J(0) = 0$. If
\[
-\infty\ <\ c_k\ \le\ \dotsb\ \le\ c_{k+m-1}\ <\ 0
\]
and $J$ satisfies $(PS)_{c_i}$ for $i = k,\dots,k+m-1$, then $J$ has $m$ distinct pairs of nontrivial critical points.
\end{theorem}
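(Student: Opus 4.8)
The plan is to run the standard minimax argument based on the cohomological index together with an equivariant (odd) quantitative deformation lemma, the $(PS)_{c_i}$ conditions being exactly what makes the latter applicable. Observe first that, since $\J$ is even with $\J(0)=0$ and $c_{k+m-1}<0$, any critical point located at one of the levels $c_i$ is necessarily nontrivial (its level differs from $\J(0)=0$) and its antipode is again a critical point at the same level, so it comes as an antipodal pair $\set{u,-u}$. Hence it suffices to prove: (a) each $c_i$, $i=k,\dots,k+m-1$, is a critical value of $\J$; and (b) whenever a value is repeated among the $c_i$, the critical set at that level has cohomological index $\ge 2$. Granting (a) and (b): if the $c_i$ are pairwise distinct, (a) already produces $m$ distinct nontrivial pairs sitting at $m$ distinct levels; if instead some value is repeated, then by (b) and the elementary fact that any finite symmetric subset of ${\cal B}\setminus\set0$ has index $1$ (it maps oddly onto $S^0$, so by \ref{i2} its index is $\le i(S^0)=1$), the critical set at that level is infinite and already contributes infinitely many pairs. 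In every case $\J$ has at least $m$ distinct pairs of nontrivial critical points.

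For (a) I would argue by contradiction. If some $c_i$ were a regular value, then, using $(PS)_{c_i}$, the odd quantitative deformation lemma provides $\eps>0$ and an odd homeomorphism $\eta$ of ${\cal B}$ with $\eta(J^{c_i+\eps})\subset J^{c_i-\eps}$. Choose $A\in\A_i$ with $\max_A\J<c_i+\eps$. Then $\eta(A)$ is compact, symmetric, and avoids $0$ (an odd homeomorphism fixes $0$), and by the monotonicity property \ref{i2}, applied to the odd map $\eta|_A$, it satisfies $i(\eta(A))\ge i(A)\ge i$, so $\eta(A)\in\A_i$; but $\max_{\eta(A)}\J\le c_i-\eps$, contradicting the definition of $c_i$ as an infimum over $\A_i$.

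For (b), suppose $c:=c_l=c_{l+1}=\dots=c_{l+r}$ with $r\ge1$; I would show $i(K^J_c)\ge r+1$. By $(PS)_c$ the set $K^J_c$ is compact, it is nonempty by (a), and it lies in $\A$ since $\J$ is even and $\J(0)=0\ne c$. Assume $i(K^J_c)\le r$. By the continuity property \ref{i4} there is $\delta>0$ with $N:=N_\delta(K^J_c)\in\A$ and $i(N)\le r$; and the deformation lemma, now used to push the sublevel set down away from the critical set, yields $\eps>0$ and an odd homeomorphism $\eta$ with $\eta\big(J^{c+\eps}\setminus\mathrm{int}\,N\big)\subset J^{c-\eps}$. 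Pick $A\in\A_{l+r}$ with $\max_A\J<c+\eps$ and put $A':=\overline{A\setminus N}$, which is compact, symmetric, avoids $0$, and lies in $A\setminus\mathrm{int}\,N\subset J^{c+\eps}\setminus\mathrm{int}\,N$. Since $A\subset A'\cup N$, subadditivity \ref{i5} gives $i(A)\le i(A')+i(N)$, hence $i(A')\ge (l+r)-r=l$, i.e.\ $A'\in\A_l$; then $\eta(A')\in\A_l$ by \ref{i2} and $\eta(A')\subset J^{c-\eps}$, so $c_l\le c-\eps<c_l$, a contradiction. Therefore $i(K^J_c)\ge r+1\ge 2$, which is what (b) requires.

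The only genuinely nontrivial input is the equivariant quantitative deformation lemma on the Banach space ${\cal B}$, and this is precisely where the Palais--Smale hypotheses are used: one needs an odd, locally Lipschitz pseudo-gradient vector field for $\J$ on a neighborhood of the relevant level minus $K^J_c$ --- relying on $(PS)_c$ to keep $\|d\J\|_{{\cal B}'}$ bounded away from $0$ there --- and the deformation given by its suitably cut-off negative flow. With ${\cal B}=X$ and $p>N$ this is available from the $C^1$ regularity of $\J$ on $X$ and the Palais--Smale facts at our disposal. Everything else is bookkeeping with the index axioms; the one place where care is needed is replacing $A\setminus N$ by its closure $\overline{A\setminus N}\subset A\setminus\mathrm{int}\,N$ in step (b), so as to remain within the class $\A$ of closed symmetric sets and to have the deformation defined on the whole set being moved.
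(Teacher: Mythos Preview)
The paper does not supply its own proof of Theorem~2.4; it simply records the statement as standard and refers the reader to \cite[Proposition~3.36]{PAO}. Your proposal reproduces precisely the classical argument one finds behind that reference: show each $c_i$ is critical via an odd deformation and monotonicity \ref{i2}, and handle repeated values by combining the continuity property \ref{i4}, subadditivity \ref{i5}, and the deformation relative to a neighborhood of $K^{\J}_c$ to force $i(K^{\J}_c)\ge r+1$. The steps are correct, including the care you take in passing from $A\setminus N$ to $\overline{A\setminus N}\subset A\setminus\mathrm{int}\,N$ so that both the index axioms and the deformation apply, and the observation that a nonempty finite symmetric set has index $1$.

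Two minor remarks on presentation. First, the closing paragraph specializing to ${\cal B}=X$ with $p>N$ is extraneous here: Theorem~2.4 is stated and used in the paper as an abstract result on an arbitrary Banach space, and the equivariant deformation lemma needs only $\J\in C^1({\cal B},\R)$ even and $(PS)_c$, not any particular model. Second, in step~(a) it is worth noting explicitly that $c_i<0=\J(0)$ guarantees, for $\eps$ small, that $0\notin J^{c_i+\eps}$; this is another way to see $\eta(A)\subset{\cal B}\setminus\{0\}$ without invoking that an odd homeomorphism fixes the origin.
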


Now, let us recall the definition and some basic properties of a pseudo--index related to the cohomological index $i$.

Let $\A^\ast$ denote the class of symmetric subsets of ${\cal B}$, let $\M \in \A$ be closed,
and let $\Gamma$ denote the group of odd homeomorphisms $\gamma$ of ${\cal B}$ such that
$\restr{\gamma}{J^0}$ is the identity. Then the pseudo-index of $A \in \A^\ast$ related to
$i$, $\M$, and $\Gamma$ is defined by
\[
i^\ast(A) = \min_{\gamma \in \Gamma}\, i(\gamma(A) \cap \M).
\]

\begin{proposition}[Benci \cite{B}]
The pseudo-index $i^\ast : \A^\ast \to \N \cup \set{0,+\infty}$ has the following properties:
\begin{properties}{i^\ast}
\item \label{i*1} If $A \subset B$, then $i^\ast(A) \le i^\ast(B)$;
\item \label{i*2} If $\eta \in \Gamma$, then $i^\ast(\eta(A)) = i^\ast(A)$;
\item \label{i*3} If $A \in \A^\ast$ and $B \in \A$ are closed, then $i^\ast(A \cup B) \le i^\ast(A) + i(B)$.
\end{properties}
\end{proposition}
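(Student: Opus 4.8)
The plan is to read off all three properties directly from the definition
\[
i^\ast(A)\ =\ \min_{\gamma \in \Gamma}\, i(\gamma(A) \cap \M),
\]
using only the elementary properties of the cohomological index recalled above --- chiefly the monotonicity \ref{i2} and the subadditivity \ref{i5} --- together with the fact that $\Gamma$ is a group. First I would record two preliminary remarks that keep the bookkeeping honest: (a) $\Gamma$ is nonempty, since it contains the identity, and $i$ takes values in the well-ordered set $\N \cup \set{0,+\infty}$, so the infimum defining $i^\ast$ is actually attained; hence for each $A \in \A^\ast$ there is $\gamma_A \in \Gamma$ with $i(\gamma_A(A) \cap \M) = i^\ast(A)$; and (b) whenever $C \incl {\cal B}$ is symmetric and closed and $\gamma$ is an odd homeomorphism, $\gamma(C) \cap \M$ is again symmetric and closed, and it belongs to $\A$ because $\M \incl {\cal B} \setminus \set{0}$, so the index is defined on it.

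For \ref{i*1} I would observe that $A \incl B$ gives $\gamma(A) \cap \M \incl \gamma(B) \cap \M$ for every $\gamma \in \Gamma$; then \ref{i2} yields $i^\ast(A) \le i(\gamma(A)\cap\M) \le i(\gamma(B) \cap \M)$, and passing to the infimum over $\gamma$ on the right gives $i^\ast(A) \le i^\ast(B)$. For \ref{i*2}, since $\Gamma$ is a group and $\eta \in \Gamma$, the map $\gamma \mapsto \gamma \circ \eta$ permutes $\Gamma$; substituting in the definition turns $i^\ast(\eta(A)) = \min_{\gamma} i((\gamma\circ\eta)(A) \cap \M)$ into $\min_{\gamma'} i(\gamma'(A)\cap\M) = i^\ast(A)$.

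For \ref{i*3} I would fix $\gamma_A$ as in remark (a). Since $\gamma_A$ is a bijection, $\gamma_A(A \cup B) \cap \M = (\gamma_A(A) \cap \M) \cup (\gamma_A(B) \cap \M)$, and both pieces are admissible by remark (b) (here I use that $A$ and $B$ are closed). Subadditivity \ref{i5} then gives
\[
i^\ast(A \cup B)\ \le\ i\big(\gamma_A(A \cup B) \cap \M\big)\ \le\ i^\ast(A) + i\big(\gamma_A(B) \cap \M\big),
\]
and it remains to bound the last term by $i(B)$. The correct way to do this --- and the step I expect to be the only subtle one --- is to use the inclusion $\gamma_A(B) \cap \M \incl \gamma_A(B)$ with monotonicity \ref{i2}, and then the fact that $\restr{\gamma_A}{B}$ is an odd homeomorphism onto $\gamma_A(B)$, so that $i(\gamma_A(B) \cap \M) \le i(\gamma_A(B)) = i(B)$; one must resist asserting $i(\gamma_A(B) \cap \M) = i(B)$, which is false in general, precisely because $\gamma_A$ was chosen to be efficient for $A$ and not for $B$. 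Combining the two estimates finishes the proof. Apart from this point, everything is a direct consequence of the group structure of $\Gamma$ and of the monotonicity and subadditivity of the Fadell--Rabinowitz index, so I do not anticipate any real difficulty.
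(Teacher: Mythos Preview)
The paper does not give its own proof of this proposition; it simply quotes the result from Benci \cite{B}. Your argument is the standard one and is correct: monotonicity \ref{i2} handles \ref{i*1}, the group structure of $\Gamma$ handles \ref{i*2}, and for \ref{i*3} you correctly combine subadditivity \ref{i5} with the chain $i(\gamma_A(B)\cap\M)\le i(\gamma_A(B))=i(B)$, the last equality coming from \ref{i2} applied to the odd homeomorphism $\restr{\gamma_A}{B}$. The closedness hypotheses on $A$, $B$ and $\M$ are exactly what is needed to invoke \ref{i5}, and your preliminary remark (b) tracks this properly.
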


For any integer $k \ge 1$ such that $k \le i(\M)$, let
\[
\A_k^\ast\ =\ \bgset{A \in \A^\ast : A \text{ is compact and } i^\ast(A) \ge k}
\]
and set
\[
c_k^\ast\ :=\ \inf_{A \in \A_k^\ast}\, \max_{u \in A}\, J(u).
\]
From $\A_{k+1}^\ast \subset \A_k^\ast$, it follows $c_k^\ast \le c_{k+1}^\ast$.
The following theorem is standard (see, e.g., \cite[Proposition 3.42]{PAO}).

\begin{theorem} \label{Theorem 2.5}
Assume that $J \in C^1({\cal B},\R)$ is even and $J(0) = 0$. If
\[
0\ <\ c_k^\ast\ \le\ \dotsb\ \le\ c_{k+m-1}^\ast\ <\ +\infty
\]
and $J$ satisfies $(PS)_{c_i^\ast}$ for $i = k,\dots,k+m-1$, then $J$ has $m$ distinct pairs of nontrivial critical points.
\end{theorem}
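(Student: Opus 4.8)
The plan is to run the standard pseudo-index minimax scheme, whose only non-formal ingredient is an equivariant quantitative deformation lemma compatible with the class $\Gamma$. First I would record the following deformation property: whenever $c > 0$, $(PS)_c$ holds, and $\enne$ is a symmetric open neighbourhood of the compact set $K^J_c$ (possibly $\enne = \emptyset$), there are $\eps > 0$, $\eps_0 \in (0,c)$ and an odd homeomorphism $\eta$ of ${\cal B}$ which equals the identity outside $J^{-1}([c-\eps_0,c+\eps_0])$ and satisfies $\eta(J^{c+\eps}\setminus \enne)\incl J^{c-\eps}$; since $\eps_0 < c$, the restriction $\restr{\eta}{J^0}$ is the identity, so $\eta\in\Gamma$. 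This is proved in the usual way: one builds a pseudo-gradient vector field for $J$ on ${\cal B}$, which may be taken odd because $J$ is even (antisymmetrise the field), integrates the associated locally Lipschitz flow, and truncates it so that it is concentrated in a thin strip around the level $c$ and never touches $J^0$; the condition $(PS)_c$ gives the uniform descent $\eps$ away from $K^J_c$.

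Next I would show that each $c^\ast_i$, $i=k,\dots,k+m-1$, is a critical value. If $c:=c^\ast_i$ were regular, then $K^J_c=\emptyset$, so the deformation property with $\enne=\emptyset$ yields an odd $\eta\in\Gamma$ with $\eta(J^{c+\eps})\incl J^{c-\eps}$. Picking $A\in\A^\ast_i$ with $\max_{u\in A}J(u)<c+\eps$, the set $\eta(A)$ is compact and symmetric, $i^\ast(\eta(A))=i^\ast(A)\ge i$ by \ref{i*2}, so $\eta(A)\in\A^\ast_i$, while $\max_{u\in\eta(A)}J(u)\le c-\eps<c^\ast_i$ — contradicting the definition of $c^\ast_i$. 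Also $0\notin K^J_{c^\ast_i}$, since $J(0)=0<c^\ast_i$.

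The multiplicity count comes from the behaviour at repeated levels. Suppose $c:=c^\ast_k=\dotsb=c^\ast_{k+r}$ with $r\ge 1$; I claim $i(K^J_c)\ge r+1$. If not, then $i(K^J_c)\le r$ and, by \ref{i4}, there is a closed symmetric neighbourhood $U$ of the compact set $K^J_c$ with $i(U)\le r$, which may be shrunk so that $0\notin U$. The deformation property with $\enne={\rm int}\,U$ gives an odd $\eta\in\Gamma$ and $\eps>0$ with $\eta(J^{c+\eps}\setminus{\rm int}\,U)\incl J^{c-\eps}$. Take $A\in\A^\ast_{k+r}$ with $\max_{u\in A}J(u)<c+\eps$. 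Then $A\setminus U$ is compact and symmetric, $A\incl(A\setminus U)\cup U$, so by \ref{i*1} and \ref{i*3}
\[
k+r\ \le\ i^\ast(A)\ \le\ i^\ast(A\setminus U)+i(U)\ \le\ i^\ast(A\setminus U)+r,
\]
hence $i^\ast(A\setminus U)\ge k$, i.e. $A\setminus U\in\A^\ast_k$. Since $A\setminus U\incl J^{c+\eps}\setminus{\rm int}\,U$, the image $\eta(A\setminus U)$ lies in $J^{c-\eps}$, is compact and symmetric, and by \ref{i*2} satisfies $i^\ast(\eta(A\setminus U))\ge k$, so $\eta(A\setminus U)\in\A^\ast_k$ with $\max_{u\in\eta(A\setminus U)}J(u)\le c-\eps<c^\ast_k$ — again a contradiction. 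Thus $i(K^J_c)\ge r+1\ge 2$; and since every finite symmetric subset of ${\cal B}\setminus\set{0}$ has cohomological index $1$ (its quotient being a finite set of points, on which $\omega$ acts trivially), $K^J_c$ must be infinite. Consequently, writing $d_1<\dotsb<d_s$ for the distinct values among $c^\ast_k\le\dotsb\le c^\ast_{k+m-1}$ with multiplicities $j_1,\dots,j_s$, each $d_l$ is a critical value with $0\notin K^J_{d_l}$, and $K^J_{d_l}$ contains at least $j_l$ distinct pairs $\set{u,-u}$ of nontrivial critical points (one if $j_l=1$, infinitely many if $j_l\ge 2$); as the $d_l$ are distinct these pairs are distinct across different $l$, giving at least $j_1+\dots+j_s=m$ pairs.

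The hard part is the deformation lemma itself: producing an odd pseudo-gradient for the merely $C^1$ functional $J$ on the Banach space ${\cal B}$, integrating it to a genuine odd homeomorphism, and truncating it so that it fixes $J^0$ while staying localised near the level $c$, with the descent controlled uniformly away from $K^J_c$ by $(PS)_c$. Once this is available, the index bookkeeping via \ref{i4} and \ref{i*1}--\ref{i*3} is routine, which is why the result is cited as standard.
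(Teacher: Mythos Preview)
Your argument is the standard pseudo-index minimax scheme and is essentially correct; the paper itself does not prove Theorem~\ref{Theorem 2.5} at all but simply cites it as standard from \cite[Proposition 3.42]{PAO}, so there is nothing to compare against beyond noting that what you wrote is exactly the kind of proof that reference contains.

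One small technical slip: when $U$ is the \emph{closed} neighbourhood furnished by \ref{i4}, the set $A\setminus U$ need not be closed in ${\cal B}$ (hence not compact), since $U^c$ is open. The usual fix is to work with $A\setminus{\rm int}\,U$, which is closed in the compact set $A$ and hence compact, while still satisfying $A\incl(A\setminus{\rm int}\,U)\cup U$ and $A\setminus{\rm int}\,U\incl J^{c+\eps}\setminus{\rm int}\,U$; the rest of your index bookkeeping via \ref{i*1}--\ref{i*3} then goes through unchanged. This is a routine adjustment and does not affect the validity of the overall scheme.
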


\section{The Palais--Smale condition} \label{secwps}

From here on, let $X$ be the Banach space in \eqref{space} and let $\J: X \to \R$ be the
functional in \eqref{gei0}.
Furthermore, we denote by
\begin{itemize}
\item $(X',\|\cdot\|_{X'})$ the dual space of $(X,\|\cdot\|_{X})$,
\item $(W^{-1,p'}(\Omega),\|\cdot\|_{W^{-1}})$ the dual space of $(W^{1,p}_0(\Omega),\|\cdot\|)$,
\item $L^q(\Omega)$ the Lebesgue space equipped with the canonical norm $|\cdot|_q$ for any $q \ge 1$,
\item $\meas( \cdot)$ the usual Lebesgue measure in $\R^N$.
\end{itemize}
By definition, $X \hookrightarrow W^{1,p}_0(\Omega)$ and $X \hookrightarrow L^\infty(\Omega)$
with continuous imbeddings; moreover, if $p^*$ is the critical exponent, i.e.
$p^* = \frac{pN}{N-p}$ if $p\in [1,N[$, $p^* = +\infty$ otherwise,
by the Sobolev Imbedding Theorem, for any $1 \le q < p^*$, a constant
$\gamma_q > 0$ exists such that
\begin{equation}\label{sobq}
|u|_q\ \le\ \gamma_q \|u\|\quad \hbox{for all $u \in W^{1,p}_0(\Omega)$.}
\end{equation}
In particular, for $1 \le p < p^*$, we have
\begin{equation}\label{sobqbis}
|u|_p\ \le\ \gamma_p \|u\|,\quad |u|_1\ \le\ \gamma_1 \|u\| \qquad \hbox{for all $u \in W^{1,p}_0(\Omega)$,}
\end{equation}
while, under the stronger assumption $p > N$, we have
\begin{equation}\label{sobqter}
|u|_\infty\ \le\ \gamma_\infty\|u\| \qquad \hbox{for all $u \in W^{1,p}_0(\Omega)$.}
\end{equation}

Letting $g^\infty$ be as in $(h_1)$ and setting $G^\infty(x,t) = \int_0^t g^\infty(x,s) ds$,
if $(h_0)$ and $(h_1)$ hold, then $g^\infty$ is a Carath\'eodory function on $\Omega\times\R$ such that
\begin{equation}\label{sug}
\sup_{|t| \le r} |g^\infty(\cdot,t)| \in L^\infty(\Omega)\qquad\hbox{for any $r > 0$;}
\end{equation}
furthermore, \eqref{sug}, respectively \eqref{suglim}, implies that
\begin{equation}\label{sug1}
\sup_{|t| \le r} |G^\infty(\cdot,t)| \in L^\infty(\Omega)\qquad\hbox{for any $r > 0$,}
\end{equation}
\begin{equation}\label{suglim1}
\lim_{|t| \to + \infty}\, \frac{G^\infty(x,t)}{|t|^{p}} = 0 \quad \text{uniformly a.e. in } \Omega.
\end{equation}
Hence \eqref{suglim} and \eqref{sug}, respectively \eqref{sug1} and \eqref{suglim1},
imply that for any $\eps > 0$ a constant $L_\eps > 0$ exists such that
\begin{eqnarray}
\label{sug3}
&&|g^\infty(x,t)|\ \le\  L_\eps + \eps |t|^{p-1}\qquad\hbox{for a.e. $x \in \Omega$, all $t \in \R$,}\\[5pt]
\label{sug2}
&&|G^\infty(x,t)|\ \le\  L_\eps + \eps |t|^p\qquad\hbox{for a.e. $x \in \Omega$, all $t \in \R$.}
\end{eqnarray}

Throughout this section, we consider the parametrized family of functionals
$\J_\lambda : X \to \R$ defined by
\begin{equation}\label{withlambda}
\J_\lambda(u)\ =\ \frac1p \int_\Omega (A(x,u) |\nabla u|^p - \lambda |u|^p)\ dx -  \int_\Omega G^\infty(x,u)\ dx.
\end{equation}

\begin{proposition}\label{smooth}
Let $p \ge 1$ and assume that the conditions $(H_0)$, $(h_0)$ and $(h_1)$ hold.
If $(u_n)_n \subset X$, $u \in X$ are such that
\begin{equation}\label{succ1}
\|u_n - u\| \to 0 \quad\hbox{as $n\to+\infty$}
\end{equation}
and $k > 0$ exists so that
\begin{equation}\label{succ2}
|u_n|_\infty \le k\qquad \hbox{for all $n \in \N$,}
\end{equation}
then for any $\lambda \in \R$, we have
\[
\J_\lambda(u_n) \to \J_\lambda (u)\qquad \hbox{and}\qquad \|d\J_\lambda(u_n) - d\J_\lambda(u)\|_{X'} \to 0
\quad\hbox{as $\ n\to+\infty$.}
\]
In particular, $\J_\lambda \in C^1(X,\R)$ with derivative $d\J_\lambda: u \in X \mapsto d\J_\lambda(u) \in X'$ defined by
\[
\begin{split}
\langle d\J_\lambda(u),\varphi\rangle\ =\ & \int_\Omega A(x,u) |\nabla u|^{p-2} \nabla u \cdot \nabla \varphi\ dx
+ \frac1p \int_\Omega A_t(x,u) \varphi |\nabla u|^p dx\\
& - \lambda \int_\Omega |u|^{p-2}u\varphi\ dx
 - \int_\Omega g^\infty(x,u) \varphi\ dx, \end{split}
\]
for any $u$, $\varphi \in X$.
\end{proposition}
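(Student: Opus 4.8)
The plan is to establish the continuity and differentiability statements simultaneously by splitting $\J_\lambda$ into its three structural pieces: the quasilinear principal term $\mathcal{P}(u) = \frac1p\int_\Omega A(x,u)|\nabla u|^p\,dx$, the $p$-linear term $-\frac{\lambda}{p}\int_\Omega |u|^p\,dx$, and the lower-order term $-\int_\Omega G^\infty(x,u)\,dx$. The last two are routine: the map $u \mapsto |u|^p$ is continuous from $L^p(\Omega)$ to $L^1(\Omega)$ and, using \eqref{sobqbis}, \eqref{succ1} gives convergence of $\int_\Omega|u_n|^p\,dx$ and (via the standard Nemytskii-operator argument on $u\mapsto|u|^{p-2}u$ from $L^p$ to $L^{p'}$) convergence of the derivative in $W^{-1,p'}(\Omega)\hookrightarrow X'$; the $G^\infty$ term is handled identically using \eqref{sug}, \eqref{sug1}, the bound \eqref{succ2}, and the local $L^\infty$-summability in \eqref{sug3}, \eqref{sug2}, which is exactly the situation in which the Nemytskii operators $u\mapsto g^\infty(\cdot,u)$ and $u\mapsto G^\infty(\cdot,u)$ are continuous on bounded sets of $L^\infty$. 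So the core of the argument is the principal term $\mathcal{P}$.

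For $\mathcal{P}$, first I would record the candidate G\^ateaux derivative
\[
\langle d\mathcal{P}(u),\varphi\rangle = \int_\Omega A(x,u)\,|\nabla u|^{p-2}\,\nabla u\cdot\nabla\varphi\,dx + \frac1p\int_\Omega A_t(x,u)\,|\nabla u|^p\,\varphi\,dx,
\]
and check it is the G\^ateaux derivative along directions in $X$ by differentiating under the integral sign — legitimate because, on the segment joining $u$ and $u+s\varphi$, all the relevant quantities stay bounded in $L^\infty$, so $(H_0)$ supplies uniform bounds on $A$ and $A_t$ and dominated convergence applies. The main work is showing $u_n\to u$ in the sense \eqref{succ1}--\eqref{succ2} forces $d\mathcal{P}(u_n)\to d\mathcal{P}(u)$ in $X'$. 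Here the standard device is to pass to a subsequence along which $\nabla u_n\to\nabla u$ a.e.\ in $\Omega$ (extracted from $L^p$-convergence of the gradients) and such that $|\nabla u_n|\le h$ for a fixed $h\in L^p(\Omega)$; combining the a.e.\ convergence $u_n\to u$ with \eqref{succ2} and the Carath\'eodory/local-$L^\infty$ properties in $(H_0)$, one gets $A(x,u_n)|\nabla u_n|^{p-2}\nabla u_n\to A(x,u)|\nabla u|^{p-2}\nabla u$ a.e.\ with an $L^{p'}(\Omega)$-domination (namely $\|A\|_\infty\,h^{p-1}$), hence strong convergence in $L^{p'}(\Omega;\R^N)$ by dominated convergence, which controls the first term of $d\mathcal{P}(u_n)-d\mathcal{P}(u)$ tested against $\nabla\varphi$ uniformly for $\|\varphi\|\le 1$. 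For the second term, $A_t(x,u_n)|\nabla u_n|^p\to A_t(x,u)|\nabla u|^p$ a.e.\ with domination $\sup_{|t|\le k}|A_t(\cdot,t)|_\infty\,h^p\in L^1(\Omega)$, so it converges in $L^1(\Omega)$; pairing against $\varphi$ and using $|\varphi|_\infty\le\|\varphi\|_X$ gives the required estimate in $X'$. A subsequence-of-every-subsequence argument then upgrades this to convergence of the full sequence, and the same a.e.-plus-domination scheme yields $\mathcal{P}(u_n)\to\mathcal{P}(u)$, completing the continuity claim for $\J_\lambda$ and $d\J_\lambda$.

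Finally, knowing that $u\mapsto d\J_\lambda(u)$ is well-defined from $X$ to $X'$ and sequentially continuous with respect to the (metrizable) topology generated by $\|\cdot\|$ on bounded subsets of $L^\infty$ — equivalently, continuous as a map from $X$ to $X'$, since $\|\cdot\|_X$-convergence implies both \eqref{succ1} and \eqref{succ2} — and that it is the G\^ateaux derivative of $\J_\lambda$, the standard criterion (a G\^ateaux-differentiable functional with continuous G\^ateaux derivative is $C^1$ in the Fr\'echet sense) gives $\J_\lambda\in C^1(X,\R)$ with the stated derivative. The main obstacle, and the step deserving the most care, is the passage to the limit in the quasilinear term $\int_\Omega A(x,u_n)|\nabla u_n|^{p-2}\nabla u_n\cdot\nabla\varphi\,dx$ uniformly over $\|\varphi\|\le 1$: one must arrange a common $L^p$-dominating function for the gradients before invoking dominated convergence, and then be careful that the subsequence extraction does not cost uniformity — which is why the full-sequence conclusion is recovered by the subsequence principle rather than directly.
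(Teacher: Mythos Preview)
Your proof is correct and follows the same overall architecture as the paper: isolate the principal term $\mathcal{P}(u)=\frac1p\int_\Omega A(x,u)|\nabla u|^p\,dx$, use $(H_0)$ together with the uniform bound \eqref{succ2} to control $A$ and $A_t$ along the sequence, and dispatch the lower-order pieces by standard Nemytskii arguments (the paper cites \eqref{sug3} and \cite{DJM} for this).

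The one genuine methodological difference is in how the limit is taken in the principal term. You extract a subsequence on which $\nabla u_n\to\nabla u$ a.e.\ and $|\nabla u_n|\le h\in L^p(\Omega)$, then apply dominated convergence to get $A(x,u_n)|\nabla u_n|^{p-2}\nabla u_n\to A(x,u)|\nabla u|^{p-2}\nabla u$ in $L^{p'}$ and $A_t(x,u_n)|\nabla u_n|^p\to A_t(x,u)|\nabla u|^p$ in $L^1$, recovering the full sequence by the subsequence principle. The paper instead works on the whole sequence throughout: it uses that $(u_n,\nabla u_n)\to(u,\nabla u)$ in measure, invokes the Vitali--Hahn--Saks theorem to obtain equi-integrability of $|\nabla u_n|^p$, and then splits each integral over the set where the integrand is within $\eps$ of its limit and its complement (which has small measure). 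Your route is more elementary and is the textbook approach; the paper's route avoids subsequences and the ``subsequence of every subsequence'' wrap-up, at the cost of invoking a slightly heavier integrability lemma. Both yield the same uniform estimates in $X'$, and neither offers a real advantage in generality here.
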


\begin{proof}
The proof is essentially a simpler version of \cite[Proposition 3.1]{CP},
but, for completeness, here we point out its main tools.\\
First of all, consider the functional $\bar\J : X \to \R$ which is defined as
\[
\bar\J(w)\ =\ \frac1p\ \int_\Omega A(x,w) |\nabla w|^p dx, \qquad w\in X,
\]
whose G\^ateaux differential in $w$ along direction $\varphi$ ($w$, $\varphi\in X$) is
\begin{eqnarray*}
\langle d\bar\J(w),\varphi\rangle &=& \int_\Omega A(x,w) |\nabla w|^{p-2} \nabla w\cdot \nabla \varphi\ dx\ +\
\frac1p\ \int_\Omega A_t(x,w)\varphi |\nabla w|^p dx.
\end{eqnarray*}
Now, let $(u_n)_n \subset X$, $u \in X$ be such that (\ref{succ1})
and (\ref{succ2}) hold. A direct consequence of (\ref{succ2}) and
$(H_0)$ is the existence of a constant $b > 0$,
$b$ depending only on $k$ and $|u|_\infty$, such that for all $n \in \N$ and a.e. $x \in \Omega$
we have
\begin{equation}\label{bounded}
|A(x,u)| \le b, \quad |A(x,u_n)| \le  b,\quad
|A_t(x,u)| \le  b,\quad |A_t(x,u_n)| \le  b.
\end{equation}
On the other hand, by (\ref{succ1}) it follows that
\[
(u_n,\nabla u_n) \to (u,\nabla u)\qquad \hbox{in measure on $\Omega$.}
\]
Thus, being $A$ and $A_t$ Carath\'eodory functions, there results
\[
\begin{split}
&A(x,u_n)|\nabla u_n|^p\ \to\ A(x,u)|\nabla u|^p,\\
&A_t(x,u_n)|\nabla u_n|^p\ \to\ A_t(x,u)|\nabla u|^p,\\
&A(x,u_n)|\nabla u_n|^{p-2}\nabla u_n\ \to\ A(x,u)|\nabla u|^{p-2}\nabla u
\end{split}
\]
in measure on $\Omega$, too,
i.e., for all $\eps > 0$ it is
\begin{equation}\label{misura1}
\meas(\Omega_{n,\eps}) \to 0,\quad
\meas(\Omega^t_{n,\eps}) \to 0,\quad
\meas(\Omega^{p-1}_{n,\eps}) \to 0,
\end{equation}
where
\[\begin{split}
&\Omega_{n,\eps} = \{x \in \Omega: \big|A(x,u_n)|\nabla u_n|^p - A(x,u)|\nabla u|^p\big| \ge \eps\}, \\
&\Omega^t_{n,\eps} = \{x \in \Omega: \big|A_t(x,u_n)|\nabla u_n|^p - A_t(x,u)|\nabla u|^p\big| \ge \eps\}, \\
&\Omega^{p-1}_{n,\eps} = \{x \in \Omega: \big|A(x,u_n)|\nabla u_n|^{p-2}\nabla u_n - A(x,u)|\nabla u|^{p-2}\nabla u\big| \ge \eps\}.
\end{split}
\]
So, fixing $\eps > 0$, by applying Vitali--Hahn--Saks Theorem
and taking into account the absolutely continuity
of the Lebesgue integral, there exists $\delta_\eps > 0$ (eventually, $\delta_\eps \le \eps$),
such that if $E \subset \Omega$, $\meas(E) < \delta_\eps$, then
\begin{equation}\label{misura2}
\int_E|\nabla u|^p dx < \eps,\qquad \int_E|\nabla u_n|^p dx < \eps
\quad\hbox{for all $n\in\N$;}
\end{equation}
moreover, by \eqref{misura1} an integer $n_\eps$ exists such that
\begin{equation}\label{misura3}
\meas(\Omega_{n,\eps}) < \delta_\eps,\; \meas(\Omega^t_{n,\eps}) < \delta_\eps,\;
\meas(\Omega^{p-1}_{n,\eps}) < \delta_\eps \quad \hbox{for all $n\ge n_\eps$.}
\end{equation}
Then, from \eqref{bounded}, \eqref{misura2}, \eqref{misura3}
and direct computations it follows that
\[\begin{split}
|\bar\J(u_n) - \bar\J(u)| \ &\le\ \frac1p\ \int_{\Omega_{n,\eps}}(|A(x,u_n)| |\nabla u_n|^p + |A(x,u)| |\nabla u|^p) dx\\
&\ +\ \frac1p\ \int_{\Omega\setminus \Omega_{n,\eps}}\big| A(x,u_n)|\nabla u_n|^p - A(x,u)|\nabla u|^p\big| dx
\ <\ b_1 \eps
\end{split}
\]
for all $n\ge n_\eps$, where $b_1 > 0$ is a suitable constant independent of $\eps$.
Whence, $\bar\J(u_n) \to \bar\J(u)$.\\
Now, fixing any $\eps > 0$ and taking any $\varphi \in X$, we have
\[\begin{split}
&|\langle d\bar\J(u_n) - d\bar\J(u),\varphi\rangle|\
\le\ \int_{\Omega^{p-1}_{n,\eps}} |A(x,u_n)| |\nabla u_n|^{p-1} |\nabla \varphi|\ dx\\
&\quad + \int_{\Omega^{p-1}_{n,\eps}}|A(x,u)||\nabla u|^{p-1} |\nabla \varphi|\ dx \\
&\quad  + \int_{\Omega \setminus \Omega^{p-1}_{n,\eps}} \big|A(x,u_n)|\nabla u_n|^{p-2} \nabla u_n
- A(x,u) |\nabla u|^{p-2} \nabla u\big| |\nabla \varphi| dx \\
&\quad +\ \frac1p\ \int_{\Omega^{t}_{n,\eps}} (|A_t(x,u_n)| |\nabla u_n|^{p} + |A_t(x,u)||\nabla u|^{p}) |\varphi|\ dx \\
&\quad +\ \frac1p\ \int_{\Omega \setminus \Omega^{t}_{n,\eps}} \big|A_t(x,u_n)|\nabla u_n|^{p}
- A_t(x,u) |\nabla u|^{p}\big| |\varphi| dx.
\end{split}
\]
Thus, reasoning as above, from \eqref{space}, \eqref{bounded}, \eqref{misura2}, \eqref{misura3}
and direct computations, a constant $b_2 > 0$, $b_2$ independent of $\eps$ and $\varphi$, exists such that
\[
\begin{split}
&|\langle d\bar\J(u_n) - d\bar\J(u),\varphi\rangle|\ \le\
\big(2 b \eps^{1 - \frac1p}+ \eps (\meas(\Omega))^{1 - \frac1p}\big) \|\varphi\|\\
&\qquad + \frac{\eps}{p}\big(2 b + \meas(\Omega)\big) |\varphi|_\infty
\ \le\ b_2 \max\{\eps,\eps^{1 - \frac1p}\} \|\varphi\|_X
\end{split}
\]
for all $n$ large enough.
Hence, by the arbitrariness of $\eps$ and $\varphi \in X$, we have
\[
\| d\bar\J(u_n) - d\bar\J(u)\|_{X'}\ \to\ 0.
\]
On the other hand, from \eqref{sug3} and standard arguments
(see, e.g., \cite[Subsection 2.1]{DJM}), it follows that
the functional
\[
u \in W^{1,p}_0(\Omega) \ \mapsto\ \frac{\lambda}{p}\ \int_\Omega |u|^p dx + \int_\Omega G^\infty(x,u) dx \in \R
\]
is $C^1$ in $(W^{1,p}_0(\Omega),\|\cdot\|)$, and so in $(X,\|\cdot\|_X)$; hence, the
tesis follows.
\end{proof}

Thus, if conditions $(H_0)$, $(h_0)$ and $(h_1)$ hold, for each $p \ge 1$, problem $(P)$ has a variational
structure and its bounded weak solutions are critical points of
$\J = \J_{\lambda^\infty}$ in the Banach space $X$.

As our aim is applying variational methods to the study of critical points of $\J$ in the asymptotically $p$-linear case,
we introduce the following further conditions:
\begin{description}{}{}
\item[$(H_3)$] we have
\[
\lim_{|t| \to + \infty} A_t(x,t)\ t\ =\ 0 \quad \hbox{uniformly a.e. in $\Omega$;}
\]
\item[$(H_4)$] there exists $\alpha_1 > 0$ (without loss of generality, $\alpha_1 \le 1$) such that
\[
A(x,t) + \frac1p\, A_t(x,t)\, t \ge \alpha_1\, A(x,t) \quad \hbox{a.e. in $\Omega$, for all $t \in \R$.}
\]
\end{description}

\begin{remark}
Hypothesis $(H_2)$ implies that
\[
\liminf_{|t| \to + \infty} A_t(x,t)\ t\ =\ 0 \quad \hbox{a.e. in $\Omega$;}
\]
hence, condition $(H_3)$ is quite natural.
\end{remark}

\begin{remark}\label{rem1}
By $(H_2)$ and $(H_3)$, for each $\eps > 0$, a radius $R_{\eps} > 0$ exists such that
\begin{eqnarray}
&&|A(x,t) - A^\infty(x)|\ <\ \eps \quad \hbox{for a.e. $x\in \Omega$, if $|t| \ge R_{\eps}$,} \label{rm1}\\[5pt]
&&|A_t(x,t) t|\ <\ \eps \quad \hbox{for a.e. $x\in \Omega$, if $|t| \ge R_{\eps}$.} \label{rm2}
\end{eqnarray}
Since \eqref{rm1} implies
\[
|A(x,t)|\ \le\ |A^\infty|_\infty + \eps \quad \hbox{for a.e. $x\in \Omega$, if $|t| \ge R_{\eps}$,}
\]
it follows from $(H_0)$ and \eqref{rm2} that
\begin{equation}\label{rm3}
|A(x,t)| \le b, \quad |A_t(x,t)| \le b\qquad \hbox{for a.e. $x \in \Omega$, for all $t \in \R$,}
\end{equation}
for a suitable $b >0$.
\end{remark}

\begin{remark}\label{rem}
In the proof of Proposition \ref{smooth}, assumption \eqref{succ2} is required only for the boundedness conditions \eqref{bounded},
which are necessary for investigating the smoothness of $\bar\J$.
However, this uniform bound can be avoided in the hypotheses $(H_2)$ and $(H_3)$ as \eqref{rm3} holds.
Whence, in this set of hypotheses,
for any $p\ge 1$, the functional $\bar\J$ is continuous in $W^{1,p}_0(\Omega)$, and so is $\J_\lambda$
for any $\lambda \in \R$.
However, in general, $\J_\lambda$ is not $C^1$ in $W^{1,p}_0(\Omega)$ as
it is G\^ateaux differentiable in $u \in W^{1,p}_0(\Omega)$ only along bounded directions.
\end{remark}

Here and in the following, by $\sigma(A_p^\infty)$ we denote the spectrum of the operator
\begin{equation}\label{operator}
A_p^\infty : u \in W^{1,p}_0(\Omega)\ \mapsto\ - \divg (A^\infty(x)\ |\nabla u|^{p-2}\ \nabla u) \in W^{-1,p'}(\Omega),
\end{equation}
which is the set of $\lambda \in \R$ such that the nonlinear eigenvalue problem
\[
\left\{
\begin{array}{ll}
- \divg (A^\infty(x)\ |\nabla u|^{p-2}\ \nabla u)\ =\ \lambda\ |u|^{p-2}\ u & \hbox{in $\Omega$,}\\
u\ =\ 0 & \hbox{on $\partial\Omega$}
\end{array}
\right.
\]
has a nontrivial (weak) solution in $W^{1,p}_0(\Omega)$, i.e. some $u \in W^{1,p}_0(\Omega)$, $u \not\equiv 0$,
exists such that
\[
\int_\Omega A^\infty(x) |\nabla u|^{p-2} \nabla u \cdot \nabla\varphi\ dx \ =\ \lambda \int_\Omega |u|^{p-2}u\varphi\ dx
\quad \hbox{for all $\varphi \in W^{1,p}_0(\Omega)$.}
\]

\begin{proposition} \label{PSbound}
If $p > 1$, the hypotheses $(H_0)$--$(H_4)$, $(h_0)$ and $(h_1)$ hold, and
$\lambda \not\in \sigma(A_p^\infty)$, then for all $\beta \in \R$, each
$(PS)_\beta$--sequence of $\J_\lambda$ in $X$ is bounded in the $W^{1,p}_0$--norm.
\end{proposition}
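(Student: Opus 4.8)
The plan is to argue by contradiction. Suppose a $(PS)_\beta$–sequence $(u_n)_n\subset X$ is unbounded in $\|\cdot\|$; passing to a subsequence, $\rho_n:=\|u_n\|\to+\infty$. Put $v_n:=u_n/\rho_n$, so that $\|v_n\|=1$, and extract a further subsequence with $v_n\rightharpoonup v$ in $W^{1,p}_0(\Omega)$ and, since $p<p^*$, $v_n\to v$ in $L^p(\Omega)$ and a.e.\ in $\Omega$. The aim is to show that $v$ is a \emph{nontrivial} weak solution of $-\divg(A^\infty(x)|\nabla v|^{p-2}\nabla v)=\lambda|v|^{p-2}v$ in $\Omega$, so that $\lambda\in\sigma(A_p^\infty)$, contradicting the hypothesis.

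Since $\langle d\J_\lambda(u_n),\varphi\rangle\to0$ for each fixed $\varphi\in X$ and $|\nabla u_n|^{p-2}\nabla u_n=\rho_n^{p-1}|\nabla v_n|^{p-2}\nabla v_n$, dividing by $\rho_n^{p-1}$ gives, for all $\varphi\in X$,
\[
\begin{split}
\int_\Omega A(x,u_n)\,|\nabla v_n|^{p-2}\nabla v_n\cdot\nabla\varphi\,dx
\ &+\ \frac{\rho_n}{p}\int_\Omega A_t(x,u_n)\,\varphi\,|\nabla v_n|^p\,dx\\
&-\ \lambda\int_\Omega|v_n|^{p-2}v_n\,\varphi\,dx\ -\ \int_\Omega\frac{g^\infty(x,u_n)}{\rho_n^{p-1}}\,\varphi\,dx\ =\ o(1).
\end{split}
\]
The third and fourth integrals are harmless: by $(h_1)$ and \eqref{sug3}, $|g^\infty(\cdot,u_n)|/\rho_n^{p-1}\le L_\eps/\rho_n^{p-1}+\eps\,|v_n|^{p-1}$ is bounded in $L^{p'}(\Omega)$, so (sending $n\to+\infty$, then $\eps\to0^+$) $g^\infty(\cdot,u_n)/\rho_n^{p-1}\to0$ in $L^{p'}(\Omega)$, while $|v_n|^{p-2}v_n\to|v|^{p-2}v$ in $L^{p'}(\Omega)$ by $L^p$–convergence.

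The delicate point — precisely the one that, as noted in the Introduction, the $t$–dependence of $A$ creates — is the second integral, which carries the extra factor $\rho_n$ and is therefore not controlled by the decay \eqref{rm2} alone. I would combine two facts. On $\{u_n\neq0\}$ (the complement being negligible, since $\nabla u_n=0$ a.e.\ there) write $A_t(x,u_n)=\bigl(A_t(x,u_n)u_n\bigr)/u_n$ and use $|u_n|=\rho_n|v_n|$; that integral becomes $\tfrac1p\int_{\{u_n\neq0\}}\bigl(A_t(x,u_n)u_n\bigr)\,\dfrac{\varphi}{v_n}\,|\nabla v_n|^p\,dx$, so on any fixed set $\{|v_n|\ge\delta\}$, $\delta>0$, it is $\le\tfrac1{p\delta}\,\esssup\{\,|A_t(x,t)t|:x\in\Omega,\ |t|\ge\rho_n\delta\,\}\,|\varphi|_\infty\to0$ by $(H_3)$, because $\rho_n\delta\to+\infty$. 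On the sets $\{|u_n|<k\}$ I would instead invoke $(H_4)$ and $(H_1)$: testing $d\J_\lambda(u_n)$ with the truncation $T_k(u_n)$ of $u_n$ at height $k$ (which lies in $X$, with $\|T_k(u_n)\|_X\le\|u_n\|+k$) and bounding its $A_t$–part on $\{|u_n|\ge k\}$ via \eqref{rm2}, the inequality $A(x,t)+\tfrac1p A_t(x,t)t\ge\alpha_1 A(x,t)\ge\alpha_0\alpha_1$ on $\{|u_n|<k\}$ makes $\int_{\{|u_n|<k\}}|\nabla u_n|^p\,dx$ small relative to $\rho_n^p$, once \eqref{sug2}, \eqref{sobq} and $\|d\J_\lambda(u_n)\|_{X'}\to0$ are used to dispose of the remaining terms. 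The honest assessment is that the careful coupling of $k$, $\delta$ and the parameter $\eps$ needed to push the whole $A_t$–integral to $0$ is the main obstacle and the technical heart of the statement; everything else is comparatively routine.

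Granting this, one obtains, for every $\varphi\in X$, $\int_\Omega A(x,u_n)|\nabla v_n|^{p-2}\nabla v_n\cdot\nabla\varphi\,dx\to\lambda\int_\Omega|v|^{p-2}v\,\varphi\,dx$. Next I would prove $v_n\to v$ \emph{strongly} in $W^{1,p}_0(\Omega)$ by an $(S_+)$–type argument: testing $d\J_\lambda(u_n)$ with $u_n-\rho_n v$ and dividing by $\rho_n^p$ (immediate for $p>N$, where $X=W^{1,p}_0(\Omega)$ and $\|u_n-\rho_n v\|_X=O(\rho_n)$; an approximation otherwise) the same estimates give $\int_\Omega A(x,u_n)|\nabla v_n|^{p-2}\nabla v_n\cdot\nabla(v_n-v)\,dx\to0$, whereas $A(x,u_n)|\nabla v|^{p-2}\nabla v\to A^\infty(x)|\nabla v|^{p-2}\nabla v$ in $L^{p'}(\Omega)$ (by $(H_2)$ where $v\neq0$, since there $|u_n|\to+\infty$, and trivially where $v=0$, since $\nabla v=0$ a.e.\ on $\{v=0\}$), so that $\int_\Omega A(x,u_n)|\nabla v|^{p-2}\nabla v\cdot\nabla(v_n-v)\,dx\to0$; subtracting and using $A(x,u_n)\ge\alpha_0>0$ together with the strict monotonicity of $\xi\mapsto|\xi|^{p-2}\xi$ yields $\nabla v_n\to\nabla v$ in $L^p(\Omega;\R^N)$. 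Hence $\|v\|=\lim_n\|v_n\|=1$, so $v\not\equiv0$; passing to the limit once more (now $A(x,u_n)|\nabla v_n|^{p-2}\nabla v_n\to A^\infty(x)|\nabla v|^{p-2}\nabla v$ in $L^{p'}(\Omega)$ and $X$ is dense in $W^{1,p}_0(\Omega)$) shows that $v$ weakly solves $-\divg(A^\infty(x)|\nabla v|^{p-2}\nabla v)=\lambda|v|^{p-2}v$. Thus $\lambda\in\sigma(A_p^\infty)$ — a contradiction — and $(u_n)_n$ is bounded in $W^{1,p}_0(\Omega)$.
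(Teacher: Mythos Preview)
Your overall plan matches the paper's exactly: assume unboundedness, normalize $v_n=u_n/\rho_n$, and show the weak limit $v$ is a nontrivial eigenfunction for $A_p^\infty$ at level $\lambda$. You have also correctly singled out the $A_t$--integral $\frac{\rho_n}{p}\int_\Omega A_t(x,u_n)\,\varphi\,|\nabla v_n|^p\,dx$ as the crux. However, the specific device you propose to close that estimate has a gap.

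Your two regions do not cover $\Omega$, and neither handles the bad zone. On $\{|v_n|\ge\delta\}$ you are fine by $(H_3)$. But on the complement you appeal to $T_k(u_n)$ with \emph{fixed} $k$: testing with $T_k(u_n)$ and using $(H_4)$ does give $\int_{\{|u_n|<k\}}|\nabla v_n|^p\,dx\le C\eps$ for $n$ large (the $A_t$ contribution on $\{|u_n|\ge k\}$ contributes $\frac{\eps}{p}\rho_n^p$ to the unrescaled identity), yet the original $A_t$--integral on $\{|u_n|<k\}$ still carries the factor $\rho_n$, so you would need $o(1/\rho_n)$ rather than $O(\eps)$ --- which you do not get. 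And on the intermediate region $\{|u_n|\ge k\}\cap\{|v_n|<\delta\}$ you only know $|A_t(x,u_n)|\le\eps/k$, hence $\rho_n|A_t|\le\rho_n\eps/k\to\infty$: neither estimate bites there. In short, truncating $u_n$ at a fixed height cannot kill the $\rho_n$ in front of the $A_t$ term.

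The paper's remedy is twofold. First (its Step~2), it truncates $v_n$ rather than $u_n$: taking $T_\mu(v_n)$ as test function and using $(H_4)$ yields $\int_{\{|v_n|<\mu\}}|\nabla v_n|^p\,dx\le b_0\max\{\mu,\mu^p\}$. Second (Step~4), instead of attacking the $A_t$ term on the bad set, it tests with $\chi_\rho(v_n)\,\varphi$, a smooth cut--off that \emph{vanishes} on $\{|v_n|<\rho\}$; then the $A_t$ integral lives only on $\{|v_n|\ge\rho\}$, where $(H_3)$ controls it as you observed. The discrepancy between $\varphi$ and $\chi_\rho(v_n)\,\varphi$ is supported on $\{|v_n|<2\rho\}$ and involves only the first--order $A^\infty$ term (no extra $\rho_n$), so Step~2 suffices there. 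Only after obtaining the \emph{$A_t$--free} limit relation $\int_\Omega A^\infty|\nabla v_n|^{p-2}\nabla v_n\cdot\nabla\varphi-\lambda\int|v_n|^{p-2}v_n\varphi\to0$ does the paper take $\varphi=v_n-v$ (extended by density to $W^{1,p}_0$) to get strong convergence. Your attempt to test the full equation directly with $u_n-\rho_n v$ would run into the same $A_t$ obstruction again. A minor point: the paper secures $v\not\equiv0$ at the outset from the energy bound $\J_\lambda(u_n)/\rho_n^p\to0$ together with $(H_1)$, independently of the delicate part; your route via $\|v\|=1$ is valid but only once strong convergence is established.
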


\begin{proof}
Taking $\beta > 0$, let $(u_n)_n \subset X$ be a $(PS)_\beta$--sequence,
i.e.
\begin{equation}\label{c1}
\J_\lambda(u_n) \to \beta \quad \hbox{and}\quad \|d\J_\lambda(u_n)\|_{X'} \to 0\qquad
\mbox{if $\ n\to+\infty$.}
\end{equation}
Arguing by contradiction, we suppose that
\begin{equation}\label{infinity}
\|u_n\| \ \to\ +\infty.
\end{equation}
Hence, without loss of generality, for any $n \in \N$ we assume $\|u_n\| \ > 0$,
and define
\begin{equation}\label{infinity1}
v_n \ =\ \frac{u_n}{\|u_n\|} ,\quad \hbox{hence}\quad \|v_n\| = 1.
\end{equation}
Then, there exists $v \in W^{1,p}_0(\Omega)$ such that, up to subsequences, we have
\begin{eqnarray}
&&v_n \rightharpoonup v\ \hbox{weakly in $W^{1,p}_0(\Omega)$,}
\label{c22}\\
&&v_n \to v\ \hbox{strongly in $L^q(\Omega)$ for each $1 \le q < p^*$,}
\label{c23}\\
&&v_n \to v\ \hbox{a.e. in $\Omega$.}
\label{c24}
\end{eqnarray}
In order to yield a contradiction, we organize the proof in some steps:
\begin{itemize}
\item[1.]  $v \not\equiv 0$;
\item[2.] a constant $b_0 > 0$ exists such that for any $\mu > 0$ there exists $n_\mu \in \N$ such that
\begin{equation}\label{ps16}
\int_{\Omega\setminus \Omega_n^\mu} |\nabla v_n|^p dx\ \le\ b_0\ \max\{\mu, \mu^p\}\qquad \hbox{for all $n\ge n_\mu$,}
\end{equation}
where
\begin{equation}\label{defmu}
\Omega_n^\mu\ =\ \{x \in \Omega :\ |v_n(x)| \ge \mu\};
\end{equation}
\item[3.] taking $\Omega_0\ =\ \{x \in \Omega :\ v(x)= 0\}$,
if $\meas(\Omega_0) > 0$ then
\begin{equation}\label{aboutzero}
\int_{\Omega_0} |\nabla v|^p dx\ =\ 0,
\end{equation}
which implies $\nabla v = 0$ a.e. in $\Omega_0$ and, clearly,
\[
\int_{\Omega_0} A^\infty(x)\ |\nabla v|^{p-2}\ \nabla v\cdot \nabla\varphi\ dx\ =\
\int_{\Omega_0}\lambda\ |v|^{p-2}\ v \varphi\ dx
\quad \hbox{for all $\varphi \in W^{1,p}_0(\Omega)$;}
\]
\item[4.] taking any $\varphi \in X$ we have
\begin{equation}\label{ps21ter}
\int_{\Omega} A^\infty(x) |\nabla v_n|^{p-2} \nabla v_n \cdot \nabla \varphi\ dx
- \lambda \int_{\Omega} |v_n|^{p-2}v_n \varphi\ dx\
 \to\ 0;
 \end{equation}
\item[5.] $\lambda \in \sigma(A_p^\infty)$, in contradiction with the hypotheses.
\end{itemize}
\smallskip

\noindent
For simplicity, here and in the following $b_i$ denotes any strictly positive constant independent of $n$.
\smallskip

\noindent
{\sl Step 1.}
Firstly, let us point out that for any $\eps > 0$ from
\eqref{sobqbis}, \eqref{sug2} and \eqref{infinity1} it follows
\[
\big|\int_\Omega \frac{G^\infty(x,u_n)}{\|u_n\|^p}\ dx\big|\
\le\ \frac{L_\eps \meas(\Omega)}{\|u_n\|^p} + \eps \gamma_p^p;
\]
hence, \eqref{infinity} implies
\[
\big|\int_\Omega \frac{G^\infty(x,u_n)}{\|u_n\|^p}\ dx\big|\
\le\ \eps (1+\gamma_p^p) \quad \hbox{for all $n \ge n_\eps$}
\]
for $n_\eps$ large enough. Thus, we have
\begin{equation}\label{tozero1}
\int_\Omega \frac{G^\infty(x,u_n)}{\|u_n\|^p}\ dx\ \to\ 0.
\end{equation}
Furthermore, \eqref{c1} and \eqref{infinity} give
\begin{equation}\label{tozero2}
\frac{\J_\lambda(u_n)}{\|u_n\|^p}\ \to\ 0.
\end{equation}
Now, arguing by contradiction, assume $v \equiv 0$. Then, from \eqref{c23} it follows
\begin{equation}\label{tozero}
\int_\Omega |v_n|^p dx\ \to\ 0,
\end{equation}
but for any $n \in \N$, condition $(H_1)$, \eqref{withlambda} and \eqref{infinity1} imply
\[\begin{split}
0\ &<\ \frac{\alpha_0}{p}\ =\ \frac{\alpha_0}{p} \|v_n\|^p\ \le\
\frac1p \int_\Omega A(x,u_n) |\nabla v_n|^p dx\\
&=\ \frac{\J_\lambda(u_n)}{\|u_n\|^p} + \frac{\lambda}{p} \int_\Omega |v_n|^p dx
 + \int_\Omega \frac{G^\infty(x,u_n)}{\|u_n\|^p}\ dx\end{split}
\]
in contradiction with \eqref{tozero1}--\eqref{tozero}.
\smallskip

\noindent
{\sl Step 2.} Taking any $\phi \in X$, we have
\begin{equation}\label{ps1}\begin{split}
&\langle d\J_\lambda(u_n),\frac{\phi}{\|u_n\|^{p-1}}\rangle\ =\
\int_\Omega A(x,u_n)\ |\nabla v_n|^{p-2} \nabla v_n \cdot \nabla \phi\ dx\\
&\quad + \frac1p \int_\Omega A_t(x,u_n)\ \|u_n\|\ |\nabla v_n|^p \phi\ dx
 - \lambda \int_\Omega |v_n|^{p-2} v_n\ \phi\ dx\\
&\quad - \int_\Omega \frac{g^\infty(x,u_n)}{\|u_n\|^{p-1}}\ \phi\ dx. \end{split}
\end{equation}
Fix any $\mu > 0$ and $\eps > 0$. From one hand,
\begin{equation}\label{ps2}
\big|\langle d\J_\lambda(u_n),\frac{\phi}{\|u_n\|^{p-1}}\rangle\big| \ \le \
\frac{\|d\J_\lambda(u_n)\|_{X'}}{\|u_n\|^{p-1}} \|\phi\|
\quad \hbox{for all $n \in \N$;}
\end{equation}
while from \eqref{sobqbis}, \eqref{sug3}, \eqref{infinity1} and the H\"older inequality it follows
\begin{equation}\label{ps3}
\big|\int_\Omega \frac{g^\infty(x,u_n)}{\|u_n\|^{p-1}} \phi\ dx\big|\ \le\
\left(\eps \gamma^{p}_{p} + \frac{L_\eps \gamma_1}{\|u_n\|^{p-1}}\right) \|\phi\|
\quad \hbox{for all $n \in \N$.}
\end{equation}
On the other hand, by \eqref{infinity1} and \eqref{defmu} we have
\begin{equation}\label{ps7}
|u_n (x)|\ > \ \mu \|u_n\|
\quad \hbox{for all $x \in\Omega_n^\mu$, $\ n \in \N$.}
\end{equation}
Then, from \eqref{infinity} an integer $n_{\mu,\eps}$, independent of $\phi$, exists such that
\eqref{c1} and \eqref{ps2} imply
\begin{equation}\label{ps4}
\big|\langle d\J_\lambda(u_n),\frac{\phi}{\|u_n\|^{p-1}}\rangle\big| \ \le \ \eps \|\phi\|
\quad \hbox{for all $n \ge n_{\mu,\eps}$,}
\end{equation}
while inequality \eqref{ps3} becomes
\begin{equation}\label{ps5}
\big|\int_\Omega \frac{g^\infty(x,u_n)}{\|u_n\|^{p-1}}\ \phi\ dx\big|\ \le\ \eps \left(\gamma_p^p + 1\right)\ \|\phi\|
\quad \hbox{for all $n \ge n_{\mu,\eps}$,}
\end{equation}
and from \eqref{rm2} and \eqref{ps7} it follows
\begin{equation}
|A_t(x,u_n(x)) u_n(x)|\ <\ \eps \quad \hbox{for a.e. $x\in \Omega_n^\mu$, if $n \ge n_{\mu,\eps}$.} \label{ps9}
\end{equation}
Hence, for all $n \ge n_{\mu,\eps}$ by \eqref{infinity1}, \eqref{defmu} and \eqref{ps9}, direct computations imply
\begin{equation}\label{ps11}
\int_{\Omega_n^\mu} \frac{\big|A_t(x,u_n)u_n\big|}{|v_n|} |\nabla v_n|^p dx\
 \le\ \frac{\eps}{\mu},
\end{equation}
and then
\begin{equation}\label{ps11bis}\begin{split}
\big|\int_{\Omega_n^\mu} A_t(x,u_n) \|u_n\|\ |\nabla v_n|^p \ \phi\ dx\big|\ &\le\
\int_{\Omega_n^\mu} \frac{\big|A_t(x,u_n)u_n\big|}{|v_n|} |\nabla v_n|^p |\phi|\ dx\\
& \le\ \frac{\eps}{\mu} |\phi|_\infty.\end{split}
\end{equation}
Now, for any $n \in \N$, let us consider the cut--off function $T_\mu : \R \to \R$
such that
\[
T_\mu(t)\ =\ \left\{\begin{array}{ll}
t&\hbox{if $|t| < \mu$,}\\
\mu \frac{t}{|t|}&\hbox{if $|t| \ge \mu$.}
\end{array}\right.
\]
As
\begin{eqnarray}\label{ps12}
T_\mu(v_n(x)) &=& \left\{\begin{array}{ll}
v_n(x)&\hbox{for a.e. $x \in \Omega \setminus \Omega_n^\mu$,}\\
\mu \frac{v_n(x)}{|v_n(x)|}&\hbox{if $x \in \Omega_n^\mu$,}
\end{array}\right.\\
\nabla T_\mu (v_n(x)) &=& \left\{\begin{array}{ll}
\nabla v_n(x)&\hbox{for a.e. $x \in \Omega \setminus \Omega_n^\mu$,}\\
0 &\hbox{for a.e. $x \in \Omega_n^\mu$,}
\end{array}\right.\label{ps13}
\end{eqnarray}
then $T_\mu(v_n) \in X$ with
\begin{equation}\label{ps14}
\|T_\mu (v_n)\| \ \le \ 1, \qquad |T_\mu (v_n)|_\infty \ \le \ \mu .
\end{equation}
Thus, applying \eqref{ps1} on the test function $\phi = T_\mu (v_n)$, we have
\[\begin{split}
&\int_\Omega A(x,u_n) |\nabla v_n|^{p-2} \nabla v_n \cdot \nabla T_\mu (v_n) dx +
\frac1p \int_\Omega A_t(x,u_n) \|u_n\| |\nabla v_n|^p T_\mu (v_n) dx\\
&\; = \langle d\J_\lambda(u_n),\frac{T_\mu (v_n)}{\|u_n\|^{p-1}}\rangle
 + \lambda \int_\Omega |v_n|^{p-2}v_n T_\mu (v_n) dx
+ \int_\Omega \frac{g^\infty(x,u_n)}{\|u_n\|^{p-1}} T_\mu (v_n) dx,
\end{split}
\]
where \eqref{infinity1}, \eqref{ps12} and \eqref{ps13} imply
\[\begin{split}
&\int_\Omega A(x,u_n) |\nabla v_n|^{p-2} \nabla v_n \cdot \nabla T_\mu (v_n) dx +
\frac1p \int_\Omega A_t(x,u_n) \|u_n\| |\nabla v_n|^p T_\mu (v_n) dx\\
&\; = \int_{\Omega\setminus \Omega_n^\mu} A(x,u_n) |\nabla v_n|^{p} dx
+ \frac1p \int_{\Omega\setminus \Omega_n^\mu} A_t(x,u_n) \|u_n\| v_n |\nabla v_n|^p dx\\
&\quad + \frac{\mu}p \int_{\Omega_n^\mu} A_t(x,u_n) \|u_n\| \frac{v_n}{|v_n|} |\nabla v_n|^p dx\\
&\; = \int_{\Omega\setminus \Omega_n^\mu} \big( A(x,u_n) + \frac1p A_t(x,u_n) u_n \big)|\nabla v_n|^p dx
+ \frac{\mu}p \int_{\Omega_n^\mu} \frac{A_t(x,u_n) u_n}{|v_n|} |\nabla v_n|^p dx,
\end{split}
\]
while \eqref{sobq} with $q = p-1$, \eqref{infinity1}, \eqref{defmu} and \eqref{ps12} give
\[
\big|\int_\Omega |v_n|^{p-2}v_n T_\mu (v_n) dx\big|\ \le\ \mu^p \meas(\Omega) + \mu \gamma_{p-1}^{p-1}.
\]
Whence, from \eqref{ps4}, \eqref{ps5}, \eqref{ps11} and \eqref{ps14} it follows
\begin{equation}\label{ps15}\begin{split}
&\int_{\Omega\setminus \Omega_n^\mu} \big( A(x,u_n) + \frac1p A_t(x,u_n) u_n \big)|\nabla v_n|^p dx\
\le\ \eps (\gamma_p^p + 2 + \frac{1}{p})\\
 &\qquad + |\lambda| (\mu^{p} \meas(\Omega) + \mu \gamma_{p-1}^{p-1})
 \qquad \hbox{for all $n\ge n_{\mu,\eps}$.}
 \end{split}
\end{equation}
As $\mu$ and $\eps$ are any and independent one from the other,
we can fix $\eps = \mu$; hence, $n_\mu = n_{\mu,\mu}$ and
\eqref{ps15} becomes
\begin{equation}\label{ps15bis}
\int_{\Omega\setminus \Omega_n^\mu} \big( A(x,u_n) + \frac1p A_t(x,u_n) u_n \big)|\nabla v_n|^p dx\
\le\ b_1\ \max\{\mu, \mu^p\}
\end{equation}
for all $n\ge n_\mu$, where $b_1 = \gamma_p^p + 2 + \frac{1}{p} + |\lambda| \meas(\Omega) + |\lambda|\gamma_{p-1}^{p-1} > 0$.\\
Vice versa, by assumptions $(H_1)$ and $(H_4)$ we have
\[\begin{split}
&\alpha_0 \alpha_1
\int_{\Omega\setminus \Omega_n^\mu} |\nabla v_n|^p dx\ \le\ \alpha_1
\int_{\Omega\setminus \Omega_n^\mu}  A(x,u_n) |\nabla v_n|^p dx\\
&\qquad \le\
\int_{\Omega\setminus \Omega_n^\mu} \big( A(x,u_n) + \frac1p A_t(x,u_n) u_n \big)|\nabla v_n|^p dx;
\end{split}
\]
whence, summing up, \eqref{ps15bis} implies \eqref{ps16} with $b_0 = \frac{b_1}{\alpha_0 \alpha_1}$.
\smallskip

\noindent
{\sl Step 3.}
Firstly, we claim that if $\meas(\Omega_0) > 0$ then for any $\mu > 0$ there exists $n^\mu \in \N$ such that
\begin{equation}\label{null}
\meas(\Omega_0 \cap \Omega_n^\mu) = 0 \quad \hbox{for all $n \ge n^\mu$.}
\end{equation}
In fact, arguing by contradiction, we assume that $\bar\mu > 0$ exists such that,
up to subsequences,
\[
\meas(\Omega_0 \cap \Omega_n^{\bar\mu}) > 0 \quad \hbox{for all $n \in \N$.}
\]
From \eqref{c24} a set $\bar\Omega \subset\Omega$ exists such that
$\meas(\bar\Omega) = 0$ and $v_n(x) \to v(x)$ for all $x \not\in \bar\Omega$;
whence, for all $n \in \N$ it results
$\meas((\Omega_0 \cap \Omega_n^{\bar\mu})\setminus \bar\Omega) > 0$
and for all $x \in (\Omega_0 \cap \Omega_n^{\bar\mu})\setminus \bar\Omega$
we have both $|v_n(x)| \ge \bar\mu$ for all $n \in \N$ and $v_n(x) \to 0$ as $n \to +\infty$:
a contradiction.\\
Now, from {\sl Step 2}, \eqref{ps16} and \eqref{null} imply that
\[
\int_{\Omega_0} |\nabla v_n|^p dx\ =\ \int_{\Omega_0\setminus \Omega_n^\mu} |\nabla v_n|^p dx\ \le\
\int_{\Omega\setminus \Omega_n^\mu} |\nabla v_n|^p dx\ \le\
b_0\ \max\{\mu, \mu^p\}
\]
for all $n$ large enough, where from the weak lower semi--continuity of norms
we have
\[
\int_{\Omega_0} |\nabla v|^p dx\ \le\
\liminf_{n\to+\infty}\int_{\Omega_0} |\nabla v_n|^p dx\ \le\ b_0\ \max\{\mu, \mu^p\}.
\]
Hence, for the arbitrariness of $\mu > 0$, \eqref{aboutzero} holds.
\smallskip

\noindent
{\sl Step 4.}
Fixing any $\rho > 0$, we introduce another cut--off function $\chi_\rho \in C^1(\R,\R)$ which has to be
even, nondecreasing in $[0,+\infty[$ and such that
\[
\chi_\rho(t)\ =\ \left\{\begin{array}{ll}
0&\hbox{if $|t| < \rho$,}\\
1 &\hbox{if $|t| \ge 2\rho$,}
\end{array}\right.
\qquad \hbox{with}\quad
|\chi_\rho'(t)| \le 2 \; \hbox{for all $t \in \R$.}
\]
Taking any $\varphi \in X$, $n \in \N$,
we denote $\omega_{\rho,n} = \chi_\rho(v_n) \varphi$, hence, by definition,
\begin{eqnarray}
\omega_{\rho,n}(x) &=& \left\{\begin{array}{ll}
0&\hbox{if $|v_n(x)| < \rho$,}\\
\varphi(x)&\hbox{if $|v_n(x)| > 2\rho$,}
\end{array}\right. \label{om1}\\
\nabla \omega_{\rho,n}(x) &=& \left\{\begin{array}{ll}
0&\hbox{if $|v_n(x)| < \rho$,}\\
\nabla\varphi(x)&\hbox{if $|v_n(x)| > 2\rho$,}
\end{array}\right. \label{om2}
\end{eqnarray}
so direct computations imply $\omega_{\rho,n} \in X$ with
\begin{equation}\label{ps17ter}
\|\omega_{\rho,n}\| \le \|\varphi\| + 2 |\varphi|_\infty \le 3 \|\varphi\|_X,\quad
|\omega_{\rho,n}|_\infty \le |\varphi|_\infty \le \|\varphi\|_X.
\end{equation}
Thus, we consider the test function $\phi = \omega_{\rho,n}$ in \eqref{ps1}, and, from \eqref{defmu} with $\mu =\rho$,
we have
\begin{equation}\label{ps19}
\begin{split}
&\int_{\Omega_n^\rho} A(x,u_n) |\nabla v_n|^{p-2} \nabla v_n \cdot \nabla \omega_{\rho,n} dx
- \lambda \int_{\Omega_n^\rho} |v_n|^{p-2}v_n \omega_{\rho,n} dx\\
&\quad =\ \langle d\J_\lambda(u_n),\frac{\omega_{\rho,n}}{\|u_n\|^{p-1}}\rangle
 - \frac1p \int_{\Omega_n^\rho} A_t(x,u_n) \|u_n\| |\nabla v_n|^p \omega_{\rho,n} dx \\
&\qquad + \int_\Omega \frac{g^\infty(x,u_n)}{\|u_n\|^{p-1}} \omega_{\rho,n} dx.
\end{split}
\end{equation}
Whence, by using \eqref{ps4} with $\phi = \omega_{\rho,n}$ and $\eps =\rho$,
\eqref{ps5} with $\phi = \omega_{\rho,n}$ and $\eps =\rho$,
\eqref{ps11bis} with $\phi = \omega_{\rho,n}$, $\eps =\rho^2$ and $\mu =\rho$,
equation \eqref{ps19} with estimates \eqref{ps17ter} implies
\begin{equation}\label{ps20}
\begin{split}
&\big|\int_{\Omega_n^\rho} A(x,u_n) |\nabla v_n|^{p-2} \nabla v_n \cdot \nabla \omega_{\rho,n} dx
- \lambda \int_{\Omega_n^\rho} |v_n|^{p-2}v_n \omega_{\rho,n} dx\big|\\
&\qquad \le\ \rho \|\omega_{\rho,n}\| +\ \frac{\rho}{p}\ |\omega_{\rho,n}|_\infty
+ \rho (\gamma^p_p + 1) \|\omega_{\rho,n}\|\ \le\ \rho\ b_2\ \|\varphi\|_X
\end{split}
\end{equation}
for all $n \ge n^1_\rho$, with $n^1_\rho$ large enough and
$b_2 = 3 \gamma^p_p + 6 + \frac1p$.\\
On the other hand, by \eqref{rm1} with any $\eps > 0$, \eqref{infinity} and \eqref{ps7} with
$\mu = \eps$ (and with $\Omega_n^\eps$ as in \eqref{defmu}), an integer $n^\eps$ exists such that
\begin{equation}
|A(x,u_n(x)) - A^\infty(x)|\ <\ \eps \quad \hbox{for a.e. $x\in \Omega_n^{\eps}$, if $n \ge n^\eps$,} \label{ps8}
\end{equation}
then, taking any $\phi \in X$, for all $n \ge n^{\eps}$ by \eqref{infinity1} and \eqref{ps8}, the H\"older inequality and direct computations imply
\begin{equation}\label{ps10bis}
\big|\int_{\Omega_n^{\eps}} \big(A(x,u_n) - A^\infty(x)\big) |\nabla v_n|^{p-2} \nabla v_n \cdot \nabla \phi\ dx\big|
\le \eps \|\phi\|.
\end{equation}
In particular, if we take $\eps = \rho$ and $\phi = \omega_{\rho,n}$ in \eqref{ps10bis}, an integer
$n^2_\rho \ge n^1_\rho$ is such that from \eqref{ps17ter} and \eqref{ps20}
it follows
\begin{equation}\label{ps10}\begin{split}
&\big|\int_{\Omega_n^{\rho}} A^\infty(x) |\nabla v_n|^{p-2} \nabla v_n \cdot \nabla \omega_{\rho,n} dx
\ -\ \lambda \int_{\Omega^{\rho}_n} |v_n|^{p-2}v_n \omega_{\rho,n}\ dx\big|\\
&\qquad \le \rho b_3 \|\varphi\|_X\end{split}
\end{equation}
for all $n \ge n^2_{\rho}$, with $b_3 = 3 + b_2$.\\
Now, from definitions \eqref{defmu} with $\mu = 2\rho$, direct computations
and \eqref{om1}, \eqref{om2} imply
\[\begin{split}
&\big|\int_{\Omega} A^\infty(x) |\nabla v_n|^{p-2} \nabla v_n \cdot \nabla \varphi dx
- \lambda \int_{\Omega} |v_n|^{p-2}v_n \varphi\ dx\big|\\
&\quad \le\
\big|\int_{\Omega\setminus \Omega^{2\rho}_n} A^\infty(x) |\nabla v_n|^{p-2} \nabla v_n \cdot \nabla ((1-\chi_\rho(v_n))\varphi)\ dx\big| \\
&\qquad + |\lambda| \big| \int_{\Omega\setminus \Omega^{2\rho}_n} |v_n|^{p-2}v_n (1-\chi_\rho(v_n))\varphi\ dx\big|\\
&\qquad + \big|\int_{\Omega^{\rho}_n} A^\infty(x) |\nabla v_n|^{p-2} \nabla v_n \cdot \nabla \omega_{\rho,n}\ dx
 - \lambda \int_{\Omega^{\rho}_n} |v_n|^{p-2}v_n \omega_{\rho,n}\ dx\big|,
\end{split}
\]
where \eqref{ps16} with $\mu = 2\rho$ (in {\sl Step 2}), \eqref{ps17ter}, $(H_2)$ and the H\"older inequality give
\[\begin{split}
& \big|\int_{\Omega\setminus \Omega^{2\rho}_n} A^\infty(x) |\nabla v_n|^{p-2} \nabla v_n \cdot \nabla ((1-\chi_\rho(v_n))\varphi) dx\big|\\
&\quad \le\ |A^\infty|_\infty \left(\int_{\Omega\setminus \Omega^{2\rho}_n}  |\nabla v_n|^{p} dx\right)^{1-\frac{1}{p}} \|\varphi -\omega_{\rho,n}\|\\
&\quad \le\ b_4 \ \max\{\rho^{p-1},\rho^{1-\frac{1}{p}}\}\  \|\varphi\|_X
\end{split}
\]
for all $n \ge n^3_\rho$, with $n^3_\rho$ large enough and $b_4 > 0$ independent of both $\rho$ and $\varphi$,
while \eqref{sobqbis} implies
\[\begin{split}
&\big| \int_{\Omega\setminus \Omega^{2\rho}_n} |v_n|^{p-2}v_n (1-\chi_\rho(v_n))\varphi\ dx\big|\
\le \ (2 \rho)^{p-1}\ \int_{\Omega\setminus \Omega^{2\rho}_n} |\varphi| dx\\
& \quad \le \ \rho^{p-1}\ 2^{p-1} \gamma_1 \|\varphi\|\ \le \ \rho^{p-1}\ 2^{p-1} \gamma_1 \|\varphi\|_X.
\end{split}
\]
Whence, taking $n_\rho \in \N$ large enough, from \eqref{ps10} it follows
\begin{equation}\label{ps21}
\begin{split}
&\big|\int_{\Omega} A^\infty(x) |\nabla v_n|^{p-2} \nabla v_n \cdot \nabla \varphi dx
- \lambda \int_{\Omega} |v_n|^{p-2}v_n \varphi dx\big|\\
&\qquad \le\ \max\{\rho^{1-\frac{1}{p}}, \rho, \rho^{p-1}\}\ b_5\ \|\varphi\|_X \qquad \hbox{for all $n \ge n_\rho$,}
\end{split}
\end{equation}
with $b_5 > 0$ independent of both $\rho$ and $\varphi$.
Thus, from the arbitrariness of $\rho$, \eqref{ps21} implies \eqref{ps21ter}.
\smallskip

\noindent
{\sl Step 5.}
Firstly, we apply \eqref{ps21ter} to $\varphi = v_n - v$ by taking into account \eqref{c23},
then by considering \eqref{c22} we have
\[
\int_{\Omega} A^\infty(x) \big(|\nabla v_n|^{p-2} \nabla v_n - |\nabla v|^{p-2} \nabla v\big)\cdot
\big(\nabla v_n - \nabla v\big)\ dx\ \to\ 0.
\]
Whence, from the properties of $A^\infty$ and the uniform convexity of
$(W^{1,p}_0(\Omega), \|\cdot\|)$ (as $p > 1$) it follows that
$\|v_n - v\| \to 0$. Thus,
\[
\int_{\Omega} A^\infty(x) |\nabla v_n|^{p-2} \nabla v_n \cdot \nabla \varphi dx \ \to\
\int_{\Omega} A^\infty(x) |\nabla v|^{p-2} \nabla v \cdot \nabla \varphi dx
\]
for any $\varphi \in W^{1,p}_0(\Omega)$, and by
\eqref{c23} and \eqref{ps21ter} it results
\[
\int_{\Omega} A^\infty(x) |\nabla v|^{p-2} \nabla v \cdot \nabla \varphi dx
\ =\  \lambda \int_{\Omega} |v|^{p-2}v \varphi dx,
\]
for any $\varphi \in X$, or better any $\varphi \in W^{1,p}_0(\Omega)$.
\end{proof}

As pointed out in Remark \ref{rem}, even if $A$ and $A_t$ are bounded,
we cannot simply replace $X$ with $W^{1,p}_0(\Omega)$, so the classical
Palais--Smale condition for $\J$ in $X$ requires the convergence not only
in the $W^{1,p}_0$--norm, but also in the $L^\infty$--norm. This problem
can be overcome if $p > N$ since then $X = W^{1,p}_0(\Omega)$ and the two norms $\|\cdot\|$ and $\|\cdot\|_X$ are equivalent.

\begin{proposition} \label{PScondition}
If $p > N$ and the hypotheses $(H_0)$--$(H_4)$, $(h_0)$ and $(h_1)$ hold, then for any $\lambda \not\in \sigma(A_p^\infty)$, the functional $\J_\lambda$ satisfies the $(PS)_\beta$ condition in $W^{1,p}_0(\Omega)$ at each level $\beta \in \R$.
\end{proposition}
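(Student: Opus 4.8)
Since $p > N$, the Sobolev Imbedding Theorem gives $X = W^{1,p}_0(\Omega)$ with the norms $\|\cdot\|$ and $\|\cdot\|_X$ equivalent (see \eqref{sobqter}), and the imbedding $W^{1,p}_0(\Omega) \hookrightarrow L^\infty(\Omega)$ is compact. The plan is: given a $(PS)_\beta$--sequence $(u_n)_n$ of $\J_\lambda$ in $W^{1,p}_0(\Omega) = X$, produce a subsequence converging in $W^{1,p}_0(\Omega)$. By Proposition \ref{PSbound}, $(u_n)_n$ is bounded in $W^{1,p}_0(\Omega)$, so, up to a subsequence, $u_n \rightharpoonup u$ weakly in $W^{1,p}_0(\Omega)$ and $u_n \to u$ in $L^\infty(\Omega)$ (hence $|u_n - u|_\infty \to 0$ and $u_n \to u$ a.e. in $\Omega$) for some $u \in W^{1,p}_0(\Omega)$, and $(|u_n|_\infty)_n$ is bounded. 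It then remains to upgrade this to the strong convergence $u_n \to u$ in $W^{1,p}_0(\Omega)$.

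First I would test $d\J_\lambda(u_n)$ on $u_n - u \in X$: since $\|d\J_\lambda(u_n)\|_{X'} \to 0$ and $(\|u_n - u\|_X)_n$ is bounded, $\langle d\J_\lambda(u_n), u_n - u\rangle \to 0$. Expanding this pairing through Proposition \ref{smooth}, I would check that the three ``lower order'' terms vanish: both $\lambda \int_\Omega |u_n|^{p-2} u_n\,(u_n - u)\, dx$ and $\int_\Omega g^\infty(x,u_n)\,(u_n - u)\, dx$ tend to $0$ because $(|u_n|^{p-2} u_n)_n$ and, by \eqref{sug3} together with the uniform $L^\infty$--bound, $(g^\infty(\cdot,u_n))_n$ are bounded in $L^\infty(\Omega)$ while $|u_n - u|_\infty \to 0$; and $\frac1p \int_\Omega A_t(x,u_n)\, |\nabla u_n|^p\,(u_n - u)\, dx \to 0$ since $|A_t(x,u_n)| \le b$ a.e. by \eqref{rm3} of Remark \ref{rem1}, $\big(\int_\Omega |\nabla u_n|^p\, dx\big)_n$ is bounded, and $|u_n - u|_\infty \to 0$. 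Hence
\[
\int_\Omega A(x,u_n)\, |\nabla u_n|^{p-2}\, \nabla u_n \cdot \nabla(u_n - u)\, dx\ \to\ 0 .
\]

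Next I would remove the ``$u$--part'': as $A$ is Carath\'eodory and $u_n \to u$ a.e., the maps $A(x,u_n)\, |\nabla u|^{p-2}\, \nabla u$ converge a.e. to $A(x,u)\, |\nabla u|^{p-2}\, \nabla u$ and satisfy $\big|A(x,u_n)\, |\nabla u|^{p-2}\, \nabla u\big| \le b\, |\nabla u|^{p-1} \in L^{p'}(\Omega)$ by \eqref{rm3}, hence they converge strongly in $L^{p'}(\Omega)^N$ by dominated convergence; paired with $\nabla(u_n - u) \rightharpoonup 0$ weakly in $L^p(\Omega)^N$ this gives $\int_\Omega A(x,u_n)\, |\nabla u|^{p-2}\, \nabla u \cdot \nabla(u_n - u)\, dx \to 0$. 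Subtracting,
\[
\int_\Omega A(x,u_n)\, \big(|\nabla u_n|^{p-2}\, \nabla u_n - |\nabla u|^{p-2}\, \nabla u\big)\cdot \nabla(u_n - u)\, dx\ \to\ 0 ,
\]
and, since $A(x,u_n) \ge \alpha_0 > 0$ by $(H_1)$ and the integrand is pointwise nonnegative by the monotonicity of $\xi \mapsto |\xi|^{p-2}\xi$, this forces $\int_\Omega \big(|\nabla u_n|^{p-2}\, \nabla u_n - |\nabla u|^{p-2}\, \nabla u\big)\cdot \nabla(u_n - u)\, dx \to 0$. At this stage the argument of {\sl Step 5} in the proof of Proposition \ref{PSbound} --- based on the uniform convexity of $(W^{1,p}_0(\Omega),\|\cdot\|)$, valid for $p > 1$ --- upgrades the weak convergence to $\|u_n - u\| \to 0$, which concludes the proof since $X = W^{1,p}_0(\Omega)$.

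I do not expect a genuinely hard step here, since Proposition \ref{PSbound} already carries the delicate part (boundedness of $(PS)$--sequences in the $t$--dependent, asymptotically $p$--linear setting). The one point needing care --- and the only place where the dependence of $A$ on $t$ shows up --- is the term $\frac1p \int_\Omega A_t(x,u_n)\, |\nabla u_n|^p\,(u_n - u)\, dx$: since $|\nabla u_n|^p$ is merely bounded in $L^1(\Omega)$, dominated convergence is unavailable and one must combine the a priori bound \eqref{rm3} on $A_t$ with the uniform convergence $u_n \to u$ furnished by $p > N$; this is exactly why the $t$--independent case is easier.
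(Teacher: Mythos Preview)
Your proof is correct, and it is in fact simpler than the paper's. The paper also starts from Proposition~\ref{PSbound} and the $L^\infty$--bound \eqref{sobqter}, but it does \emph{not} use the uniform convergence $|u_n-u|_\infty\to 0$; instead it tests with the exponential function $\psi(w_n)=w_n\,e^{\eta w_n^2}$ (a Boccardo--Murat--Puel device) so that the $A_t$--term can be absorbed into the $A$--term through the algebraic inequality $\beta_1\psi'(t)-\beta_2|\psi(t)|>\beta_1/2$, and then reaches the same monotonicity conclusion. Your route exploits $p>N$ more fully: the compact imbedding $W^{1,p}_0(\Omega)\hookrightarrow L^\infty(\Omega)$ gives $|u_n-u|_\infty\to 0$, which makes the troublesome term $\frac1p\int_\Omega A_t(x,u_n)\,|\nabla u_n|^p\,(u_n-u)\,dx$ vanish directly via \eqref{rm3} and the $W^{1,p}_0$--bound, so the plain test function $u_n-u$ suffices. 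The paper's argument is more robust (it would survive with only an a priori $L^\infty$--bound and $L^q$--compactness, not uniform convergence), while yours is shorter and entirely adequate under the standing assumption $p>N$.
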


\begin{proof}
Taking $\beta > 0$, let $(u_n)_n \subset W^{1,p}_0(\Omega)$ be a $(PS)_\beta$--sequence, i.e. \eqref{c1} holds.
From Proposition \ref{PSbound} and \eqref{sobqter} a constant $L > 0$ exists such that
\begin{equation}\label{ps25}
\|u_n\| \ \le\ L\quad\hbox{and}\quad |u_n|_\infty \ \le\ \gamma_\infty L  \qquad \hbox{for all $n \in \N$.}
\end{equation}
Hence, up to subsequences, there exists $u \in W^{1,p}_0(\Omega)$ such that
\begin{eqnarray}
&&u_n \rightharpoonup u\ \hbox{weakly in $W^{1,p}_0(\Omega)$,}
\label{c2}\\
&&u_n \to u\ \hbox{strongly in $L^q(\Omega)$ for each $q \ge 1$,}
\label{c3}\\
&&u_n \to u\ \hbox{a.e. in $\Omega$,}
\label{c4}
\end{eqnarray}
and $h \in L^p(\Omega)$ exists such that
\begin{equation}\label{infinity2}
|u_n(x)| \ \le\ h(x)\quad \hbox{a.e. in $\Omega$, for all $n \in \N$.}
\end{equation}
We claim that $u_n \to u$ strongly in $W^{1,p}_0(\Omega)$.\\
This proof is essentially as in {\sl Step 4.} of the proof
of \cite[Proposition 4.6]{CP} and follows some arguments in \cite{AB1}
according to an idea introduced in \cite{BMP}. Anyway, for completeness,
here we prove it.\\
Let us consider the real map $\psi(t) = t {\rm e}^{\eta t^2}$,
where $\eta > (\frac{\beta_2}{2\beta_1})^2$ will be fixed once
$\beta_1$, $\beta_2 > 0$ are chosen, later on, in a suitable way.
By definition,
\begin{equation}\label{eq4}
\beta_1 \psi'(t) - \beta_2 |\psi(t)| > \frac{\beta_1} 2\qquad \hbox{for all $t \in \R$.}
\end{equation}
Taking $w_{n}= u_n - u$, from \eqref{ps25} it follows
\[
|w_n|_\infty\ \le\ \gamma_\infty L + |u|_\infty;
\]
moreover, \eqref{c2} -- \eqref{infinity2} imply
\begin{eqnarray}
\label{cc2}
&&w_{n} \rightharpoonup 0\quad \hbox{weakly in $W^{1,p}_0(\Omega)$,}\\
&&w_{n} \to 0\quad \hbox{strongly in $L^q(\Omega)$ for all $q \ge 1$,}\nonumber\\
&&w_{n} \to 0 \quad \hbox{ a.e. in $\Omega$,}\nonumber
\end{eqnarray}
and $|w_n(x)| \le h(x) + |u(x)|$ a.e. in $\Omega$, for all $n \in \N$, with $h + |u| \in L^p(\Omega)$.
Hence, $\sigma_0 > 0$ exists such that
\begin{eqnarray}
&&|\psi(w_{n})| \le \sigma_0,\quad 0<\psi'(w_{n}) \le \sigma_0 \qquad\hbox{a.e. in $\Omega$, for all $n\in\N$,}\label{stim1}\\
&&\psi(w_{n}) \to 0, \quad
\psi'(w_{n}) \to 1 \qquad\hbox{a.e. in $\Omega$ if $n\to +\infty$.}\label{stim2}
\end{eqnarray}
Thus, $(\psi(w_{n}))_n$ is bounded in $W^{1,p}_0(\Omega)$, and \eqref{c1} implies
\begin{equation}\label{ip3}
\langle d\J_\lambda(u_n),\psi(w_{n})\rangle \to 0 \quad \hbox{as $n\to +\infty$,}
\end{equation}
where it is
\begin{equation}\label{ip1}
\begin{split}
&\langle d\J_\lambda(u_n),\psi(w_{n})\rangle\ =\
 \int_{\Omega} \psi'(w_{n})\ A(x,u_n) |\nabla u_n|^{p-2} \nabla u_n \cdot \nabla w_{n}\ dx\\
& \quad +\ \frac1p\ \int_{\Omega} A_t(x,u_n) \psi(w_{n}) |\nabla u_n|^p dx
- \lambda \int_{\Omega} |u_n|^{p-2} u_n \psi(w_{n})\ dx\\
&\quad -\ \int_{\Omega} g^\infty(x,u_n)\psi(v_{k,n})\ dx.
\end{split}
\end{equation}
By \eqref{sug3}, \eqref{c4}, \eqref{infinity2}, \eqref{stim1} and \eqref{stim2},
the Lebesgue Dominated Convergence Theorem implies
\[
\int_{\Omega} |u_n|^{p-2} u_n \psi(w_{n}) dx\ \to\ 0,\qquad
\int_{\Omega} g^\infty(x,u_n)\psi(w_{n}) dx\ \to\ 0;
\]
whence, by \eqref{ip3} and \eqref{ip1} we have
\begin{equation}\label{ip2}
\begin{split}
&\int_{\Omega} \psi'(w_{n})\ A(x,u_n) |\nabla u_n|^{p-2} \nabla u_n \cdot \nabla w_{n}\ dx\\
&\qquad +\ \frac1p\ \int_{\Omega} A_t(x,u_n) \psi(w_{n}) |\nabla u_n|^p dx\ =\  \eps_{1,n},
\end{split}
\end{equation}
with $\eps_{1,n} \to 0$.
On the other hand, from $(H_1)$ and \eqref{rm3}
it follows
\[
\begin{split}
&\big|\int_{\Omega} A_t(x,u_n) \psi(w_{n}) |\nabla u_n|^p dx\big|\ \le\ \frac{b}{\alpha_0}
\int_{\Omega} A(x,u_n) |\psi(w_{n})|\ |\nabla u_n|^p dx\\
&\quad =\ \frac{b}{\alpha_0}
\int_{\Omega} A(x,u_n) |\psi(w_{n})|\ |\nabla u_n|^{p-2} \nabla u_n \cdot \nabla w_n\ dx\\
&\qquad + \frac{b}{\alpha_0}
\int_{\Omega} A(x,u_n) |\psi(w_{n})|\ |\nabla u_n|^{p-2} \nabla u_n \cdot \nabla u\ dx,
\end{split}
\]
where \eqref{rm3}, H\"older inequality, \eqref{ps25}, \eqref{stim1}, \eqref{stim2},
and the Lebesgue Dominated Convergence Theorem give
\[
\int_{\Omega} A(x,u_n) |\psi(w_{n})|\ |\nabla u_n|^{p-2} \nabla u_n \cdot \nabla u\  dx\ \to \ 0.
\]
Whence, a sequence $\eps_{2,n} \to 0$ exists such that from \eqref{ip2} and the above estimates it follows
\begin{equation}\label{stim3}
\eps_{2,n}\ \ge\ \int_{\Omega} \big(\psi'(w_{n}) - \frac{b}{p \alpha_0} |\psi(w_{n})|\big)
A(x,u_n) |\nabla u_n|^{p-2} \nabla u_n \cdot \nabla w_{n}\ dx
\end{equation}
for all $n \in \N$.
Now, taking $\beta_1 =1$ and $\beta_2 = \frac{b}{p \alpha_0}$ in the definition of $\psi$,
and denoting $h_{n} = \beta_1 \psi'(w_{n}) - \beta_2|\psi(w_{n})|$,
from \eqref{eq4} and \eqref{stim1} it follows
\begin{equation}\label{stim10}
\frac12 \le h_{n}(x) \le \sigma_0(1 + \beta_2)
\quad\hbox{a.e. in $\Omega$, for all $n \in \N$;}
\end{equation}
while from (\ref{stim2}) it is
\begin{equation}\label{stim4}
h_{n}(x) \to 1\quad\hbox{a.e. in $\Omega$, as $n \to +\infty$.}
\end{equation}
Moreover, it is
\begin{eqnarray*}
&&\int_{\Omega}
h_{n}\ A(x,u_n) |\nabla u_n|^{p-2} \nabla u_n \cdot \nabla w_{n} dx
 =
\int_{\Omega} A(x,u) |\nabla u|^{p-2} \nabla u \cdot \nabla w_{n} dx\\
&&\qquad +
\int_{\Omega}
\big(h_{n}\ A(x,u_n) - A(x,u)\big)\ |\nabla u|^{p-2} \nabla u \cdot \nabla w_{n}\ dx\\
&&\qquad +
\int_{\Omega}
h_{n}\ A(x,u_n) \big(|\nabla u_n|^{p-2} \nabla u_n - |\nabla u|^{p-2} \nabla u\big) \cdot \nabla w_{n}\ dx,
\end{eqnarray*}
where \eqref{cc2} implies
\[
\int_{\Omega} A(x,u) |\nabla u|^{p-2} \nabla u \cdot \nabla w_{n}\ dx\ \to\ 0,
\]
while H\"older inequality, \eqref{ps25}, and also \eqref{rm3}, \eqref{c4},
\eqref{stim10}, \eqref{stim4} and the Lebesgue Dominated Convergence Theorem, imply
\[
\int_{\Omega}
\big(h_{n}\ A(x,u_n) - A(x,u)\big)\ |\nabla u|^{p-2} \nabla u \cdot \nabla w_{n}\ dx
	 \ \to\ 0.
\]
Thus, the convexity condition
\[
\big(|\nabla u_n|^{p-2} \nabla u_n - |\nabla u|^{p-2} \nabla u\big) \cdot \nabla w_{n} \ \ge\ 0\quad \hbox{a.e. in $\Omega$,}
\]
$(H_1)$, \eqref{stim3} and \eqref{stim10} give
\begin{eqnarray*}
\eps_{3,n} &\ge&\int_{\Omega}
h_{n}\ A(x,u_n) \big(|\nabla u_n|^{p-2} \nabla u_n - |\nabla u|^{p-2} \nabla u\big) \cdot \nabla w_{n}\ dx\\
&\ge&\displaystyle \frac{\alpha_0}2 \int_{\Omega}
\ \big(|\nabla u_n|^{p-2} \nabla u_n - |\nabla u|^{p-2} \nabla u\big) \cdot \nabla w_{n}\ dx \ge 0.
\end{eqnarray*}
for a suitable $\eps_{3,n} \to 0$.
Whence, $\|u_n - u\| \to 0$.
\end{proof}

\section{Main result}

In addition to the hypotheses $(H_0)$--$(H_4)$, $(h_0)$ and $(h_1)$, we assume
\begin{description}
\item[$(h_2)$] there exist $\lambda^0 \in \R$ and a (Carath\'eodory) function
$g^0 : \Omega \times \R \to \R$ such that
\[
f(x,t)\ =\ \lambda^0\, |t|^{p-2}\, t + g^0(x,t)
\]
and
\[
\lim_{t \to 0}\, \frac{g^0(x,t)}{|t|^{p-1}}\ =\ 0 \quad \text{uniformly a.e. in } \Omega.
\]
\end{description}
From $(h_2)$ it follows
\begin{equation}\label{dah2}
\lim_{t \to 0}\, \frac{G^0(x,t)}{|t|^{p}}\ =\ 0 \quad \text{uniformly a.e. in } \Omega,
\end{equation}
where $G^0(x,t) = \int_\Omega g^0(x,s)ds$.

Moreover, if we write $A^0(x) = A(x,0)$, then $(H_0)$ implies $A^0 \in L^\infty(\Omega)$,
while from $(H_1)$ it follows $A^0(x) \ge \alpha_0 > 0$ a.e. in $\Omega$. Furthermore,
$(H_0)$ and $(H_3)$ imply \eqref{rm3}; whence,
\begin{equation}\label{h4}
\lim_{t \to 0}\, A(x,t) = A^0(x) \quad \text{uniformly a.e. in } \Omega.
\end{equation}
For simplicity, as in \eqref{operator}, we introduce the operator
\[
A^0_p\ :\ u \in W^{1,p}_0(\Omega)\ \mapsto\ - \divg (A^0(x) |\nabla u|^{p-2} \nabla u) \in W^{-1,p'}(\Omega)
\]
and denote its spectrum by $\sigma(A^0_p)$.

For $\sharp = 0, \infty$, let
\[
I^\sharp(u)\ =\ \int_\Omega A^\sharp(x)\ |\nabla u|^p dx, \quad u \in W^{1,p}_0(\Omega),
\]
and let
\[
\M^\sharp = \bgset{u \in W^{1,p}_0(\Omega)\ :\ I^\sharp(u) = 1}.
\]
Since the hypotheses imply that
\begin{equation}\label{lim3}
A^\sharp \in L^\infty(\Omega)\qquad \hbox{and}\qquad A^\sharp(x) \ge \alpha_0 > 0 \;\ \hbox{for a.e. $x \in \Omega$,}
\end{equation}
then $\M^\sharp \subset W^{1,p}_0(\Omega) \setminus \set{0}$ is a bounded symmetric
complete $C^1$-Finsler manifold radially homeomorphic to the unit sphere in $W^{1,p}_0(\Omega)$. Let
\[
\Psi(u) = \frac{1}{\dint_\Omega |u|^p dx}, \quad u \in W^{1,p}_0(\Omega) \setminus \set{0}.
\]
Then $\lambda \in \sigma(A_p^\sharp)$ if and only if $\lambda$ is a critical value of $\restr{\Psi}{\M^\sharp}$
by the Lagrange multiplier rule.

Now, let $\F^\sharp$ denote the class of compact symmetric subsets of $\M^\sharp$ and set
\[
\lambda^\sharp_k := \inf_{M \in \F^\sharp_k}\, \max_{u \in M}\, \Psi(u), \quad k \ge 1,
\]
where $\ \F^\sharp_k = \bgset{M \in \F^\sharp : i(M) \ge k}$, and $i$ is the cohomological index.
Then $\lambda^\sharp_k \in \sigma(A_p^\sharp)$ and $0 < \lambda^\sharp_k \nearrow +\infty$
(see \cite[Proposition 3.52]{PAO}). In particular,
\begin{equation}\label{stima}
\lambda^\sharp_1 \int_\Omega |u|^p dx\ \le\ \int_\Omega A^\sharp(x) |\nabla u|^p dx\qquad
\hbox{for all $u \in W^{1,p}_0(\Omega)$.}
\end{equation}

Our main result is the following.

\begin{theorem} \label{Theorem 4.1}
Assume that $p > N$, $(H_0)$--$(H_4)$ and $(h_0)$--$(h_2)$ hold, and
\begin{itemize}
\item $A(x,\cdot)$ is an even function for a.a.\! $x \in \Omega$ and $f(x,\cdot)$ is an odd function for a.a.\! $x \in \Omega$,
\item $\lambda^\infty \not\in \sigma(A_p^\infty)$.
\end{itemize}
If $m, l \in \N$, $l\ne m$, exist such that one of the two following conditions hold:
\begin{itemize}
\item[{\sl (i)}] $l > m \quad \hbox{and}\quad \lambda^0_l < \lambda^0,\quad \lambda^\infty < \lambda^\infty_{m + 1}$;
\item[{\sl (ii)}] $l < m \quad \hbox{and}\quad
\lambda^0 < \lambda^0_{l + 1},\quad \lambda^\infty_m < \lambda^\infty$;
\end{itemize}
then problem $(P)$ has at least $|l - m|$ distinct pairs of nontrivial solutions.
\end{theorem}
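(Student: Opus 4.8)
The plan is to derive the theorem from the abstract Theorems \ref{Theorem 2.4} and \ref{Theorem 2.5} applied to $\J := \J_{\lambda^\infty} \in C^1(W^{1,p}_0(\Omega),\R)$: recall that $X = W^{1,p}_0(\Omega)$ since $p>N$, and that $\J$ coincides with $\J_{\lambda^\infty}$ of \eqref{withlambda} by \eqref{gei0} and $(h_1)$. Because $A(x,\cdot)$ is even and $f(x,\cdot)$ odd, $F(x,\cdot)$ is even, so $\J$ is even with $\J(0)=0$; moreover $\J$ satisfies $(PS)_\beta$ for every $\beta\in\R$ by Proposition \ref{PScondition}, as $p>N$ and $\lambda^\infty\notin\sigma(A_p^\infty)$. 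Its nontrivial critical points are exactly the nontrivial weak solutions of $(P)$, so it is enough to produce $|l-m|$ distinct pairs of them.

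The computational heart consists of two asymptotic estimates, both made uniform by the fact that $p>N$ forces $\M^0$ and $\M^\infty$ to be bounded in $L^\infty(\Omega)$ as well as in $W^{1,p}_0(\Omega)$. Near the origin, using \eqref{h4} (hence, $A$ being bounded by \eqref{rm3}, $A(x,\rho u)|\nabla u|^p\to A^0(x)|\nabla u|^p$ in $L^1(\Omega)$ uniformly on $\M^0$) and \eqref{dah2}, one obtains
\[
\J(\rho u)\ =\ \frac{\rho^p}{p}\left(1-\frac{\lambda^0}{\Psi(u)}\right)+o(\rho^p)\qquad\text{as }\rho\to0^+,\text{ uniformly for }u\in\M^0 .
\]
At infinity, using \eqref{rm1}, \eqref{suglim1}, \eqref{sug2} and the fact that $\nabla u=0$ a.e.\ on $\{u=0\}$, one obtains, for each \emph{compact} symmetric $M\subset\M^\infty$,
\[
\J(\rho u)\ =\ \frac{\rho^p}{p}\left(1-\frac{\lambda^\infty}{\Psi(u)}\right)+o(\rho^p)\qquad\text{as }\rho\to+\infty,\text{ uniformly for }u\in M .
\]

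\emph{Case (i).} I would apply Theorem \ref{Theorem 2.4} to the $l-m$ levels $c_{m+1}\le\dots\le c_l$. Since $\lambda^0_l<\lambda^0$ there is a compact symmetric $M\subset\M^0$ with $i(M)\ge l$ and $\max_M\Psi<\lambda^0$; the first estimate gives $\rho>0$ with $\max_{\rho M}\J<0$, and $\rho M\in\A_l$, so $c_l<0$, whence $c_{m+1}\le\dots\le c_l<0$. It remains to prove $c_{m+1}>-\infty$, and \emph{this is the step I expect to be the main obstacle}: one must show, using $\lambda^\infty<\lambda^\infty_{m+1}$, that for $C$ large the sublevel $\set{u:\J(u)\le -C}$ has cohomological index $\le m$ (so that no $A\in\A_{m+1}$ can be contained in it). This requires a delicate analysis of $\J$ at infinity in the spirit of Proposition \ref{PSbound} — in particular a cut-off argument controlling the region where $|u|$ is small, where $A(x,u)$ need not be close to $A^\infty(x)$, so that $\J$ is not uniformly close to the homogeneous functional $\frac1p(I^\infty(u)-\lambda^\infty\int_\Omega|u|^p\,dx)$. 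Granting this, Theorem \ref{Theorem 2.4} and the $(PS)$ condition yield $l-m$ distinct pairs of nontrivial critical points.

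\emph{Case (ii).} I would apply Theorem \ref{Theorem 2.5}. Fix $\delta>0$ with $\lambda^0+\delta<\lambda^0_{l+1}$; then $A':=\set{v\in\M^0:\Psi(v)\ge\lambda^0+\delta}\in\A$ is closed and symmetric with $i(\M^0\setminus A')\le l$, hence, by \ref{i5} and $i(\M^0)=+\infty$, also $i(A')=+\infty$. By the first estimate, $\M:=\rho_0 A'$ satisfies $\inf_\M\J=:\delta_1>0$ once $\rho_0>0$ is small. Taking this $\M$ (so $i(\M)=+\infty$) and the group $\Gamma$ of Section~2 in the definition of $i^\ast$, every $A\in\A_k^\ast$ meets $\M$ (choose $\gamma=\mathrm{id}$), so $c_k^\ast\ge\delta_1>0$ for all $k\ge1$. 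Next, since $\lambda^\infty_m<\lambda^\infty$ there is a \emph{compact} symmetric $M\subset\M^\infty$ with $i(M)\ge m$ and $\max_M\Psi<\lambda^\infty$; by the second estimate there is $R>0$ with $\J(Ru)<0$ and $I^0(Ru)=R^p I^0(u)>\rho_0^p$ for all $u\in M$. Put $A=\set{tu:u\in M,\ 0\le t\le R}$, a compact symmetric set. Applying the piercing property \ref{i7} to $\varphi(u,t)=\gamma(tRu)$ with $A_0=\set{I^0\le\rho_0^p}$ and $A_1=\set{I^0\ge\rho_0^p}$ — noting $\varphi(u,0)=0\in A_0$, $\varphi(u,1)=\gamma(Ru)=Ru\in A_1$ since $Ru\in J^0$, and $A_0\cap A_1=\rho_0\M^0=\M\cup\rho_0(\M^0\setminus A')$ — together with \ref{i5} and $i(\rho_0\overline{\M^0\setminus A'})\le l$, gives $i(\gamma(A)\cap\M)\ge i(M)-l\ge m-l$ for every $\gamma\in\Gamma$, i.e.\ $i^\ast(A)\ge m-l$. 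Hence $c_{m-l}^\ast\le\max_A\J<+\infty$ and $0<c_1^\ast\le\dots\le c_{m-l}^\ast<+\infty$, so Theorem \ref{Theorem 2.5} and the $(PS)$ condition yield $m-l$ distinct pairs of nontrivial critical points; the only delicate point here is the piercing estimate $i^\ast(A)\ge m-l$.
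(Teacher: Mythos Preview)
Your overall plan and your two asymptotic estimates coincide with the paper's: the estimate at the origin is the content of Lemma~\ref{Lemma 4.2}, and the estimate at infinity (for a fixed compact $M\subset\M^\infty$) is exactly Lemma~\ref{Lemma 4.3}. The use of Theorem~\ref{Theorem 2.4} in case~(i) and Theorem~\ref{Theorem 2.5} in case~(ii) is also the paper's scheme.

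\emph{Case (i).} The paper does \emph{not} analyze the index of a sublevel set to get $c_{m+1}>-\infty$; your proposed route is more complicated than what is actually done. For an arbitrary $M_\infty\in\A_{m+1}$ the paper projects radially onto $\tilde M_\infty\subset\M^\infty$ (an odd homeomorphism, so $i(\tilde M_\infty)\ge m+1$), hence $\max_{\tilde M_\infty}\Psi\ge\lambda^\infty_{m+1}$. Fixing $\eps$ with $(1-\eps)\lambda^\infty_{m+1}\ge\lambda^\infty$ and applying Lemma~\ref{Lemma 4.3} with ${\cal K}_\infty=\tilde M_\infty$ gives, for the point $v^*\in M_\infty$ whose projection $u^*$ realizes this maximum,
\[
\J(v^*)\ \ge\ J^\infty(v^*)-\tfrac{\eps}{p}\,I^\infty(v^*)-C_\eps\ =\ \tfrac{I^\infty(v^*)}{p}\big(1-\eps-\lambda^\infty|u^*|_p^p\big)-C_\eps\ \ge\ -C_\eps,
\]
so $\max_{M_\infty}\J\ge-C_\eps$. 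The ``cut-off argument controlling the region where $|u|$ is small'' that worried you is precisely what is hidden in the constant $C_\eps$ of Lemma~\ref{Lemma 4.3} (its Steps~(a)--(b)); once Lemma~\ref{Lemma 4.3} is available, no further sublevel/index analysis is needed.

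\emph{Case (ii).} Here you and the paper genuinely diverge. The paper takes the \emph{full sphere} $\M=\partial B^W_r(0)$ (with $r$ small, fixed via Lemma~\ref{Lemma 4.2}) and works with the $m-l$ levels $c^*_{l+1},\dots,c^*_m$: for $c^*_{l+1}>0$, one projects $M^*_0\cap\partial B^W_r(0)$ (index $\ge i^\ast(M^*_0)\ge l+1$) onto $\M^0$ and uses $\max\Psi\ge\lambda^0_{l+1}>\lambda^0$; for $c^*_m<+\infty$, one applies the piercing property with $A_0=B^W_r(0)$ and $A_1=\overline{W^{1,p}_0(\Omega)\setminus B^W_r(0)}$, obtaining $i^\ast(\tilde M^*_\infty)\ge m$ directly, with no subtraction of indices. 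Your alternative --- taking $\M=\rho_0 A'\subset\rho_0\M^0$ and the levels $c^*_1,\dots,c^*_{m-l}$ --- is coherent, but it hinges on the claim $i\big(\{v\in\M^0:\Psi(v)\le\lambda^0+\delta\}\big)\le l$. This identification of the cohomological eigenvalues with index jumps of $\Psi$-sublevels is true (it is established in \cite{PAO}), yet it is not among the properties $(i_1)$--$(i_8)$ recalled here and needs a separate deformation/PS argument on $\M^0$. The paper's choice of $\M=\partial B^W_r(0)$ bypasses this extra ingredient entirely, at the cost of starting the pseudo-index count at $l+1$ rather than at $1$.
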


From here on, let $p > N$ and assume that the hypotheses of Theorem \ref{Theorem 4.1} hold.
Thus, $X = W^{1,p}_0(\Omega)$ and, from \eqref{gei0} and condition $(h_1)$,
\[
\J(u)\ =\ \frac1p \int_\Omega \left(A(x,u)\ |\nabla u|^p - \lambda^\infty\ |u|^p\right) dx\
-\ \int_\Omega G^\infty(x,u) dx, \quad u \in W^{1,p}_0(\Omega),
\]
while from $(h_2)$,
\[
\J(u)\ =\ \frac1p \int_\Omega \left(A(x,u)\ |\nabla u|^p - \lambda^0\ |u|^p\right) dx\
-\ \int_\Omega G^0(x,u) dx, \quad u \in W^{1,p}_0(\Omega).
\]
Furthermore, for $\sharp = 0, \infty$, we write
\begin{equation}\label{diff21}
J^\sharp(u)\ =\ \frac1p \int_\Omega \left(A^\sharp(x)\ |\nabla u|^p - \lambda^\sharp\ |u|^p\right) dx\
 =\ \frac1p \left(I^\sharp(u) - \lambda^\sharp\, |u|_p^p\right);
\end{equation}
whence,
\begin{equation}\label{diff2}
\J(u) - J^\sharp(u)\ =\ \frac1p\ \int_\Omega \left(A(x,u) - A^\sharp(x)\right) |\nabla u|^p dx\ -\ \int_\Omega G^\sharp(x,u) dx,
\end{equation}
for $u \in W^{1,p}_0(\Omega)$.

In order to prove our main result, we need the following lemmas.

\begin{lemma} \label{Lemma 4.2}
For any $\eps > 0$, a suitable $r_\eps > 0$ exists such that
\begin{equation} \label{4.1}
u \in W^{1,p}_0(\Omega),\ \pnorm[\infty]{u} \le r_\eps\; \implies\;
\abs{\J(u) - J^0(u)} \le \frac{\eps}{p}\, I^0(u).
\end{equation}
\end{lemma}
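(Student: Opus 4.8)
The plan is to start from the identity \eqref{diff2} with $\sharp = 0$, which gives
\[
\J(u) - J^0(u)\ =\ \frac1p\ \int_\Omega \left(A(x,u) - A^0(x)\right) |\nabla u|^p dx\ -\ \int_\Omega G^0(x,u) dx,
\]
and estimate the two terms on the right separately, using smallness of $\pnorm[\infty]{u}$ to make each at most $\frac{\eps}{2p}\, I^0(u)$. For the first term, the relevant tool is \eqref{h4}: since $A(x,t) \to A^0(x)$ uniformly a.e. in $\Omega$ as $t \to 0$, given $\eps > 0$ there is $\delta_\eps > 0$ such that $|A(x,t) - A^0(x)| < \frac{\eps \alpha_0}{2}$ for a.e.\ $x \in \Omega$ whenever $|t| \le \delta_\eps$. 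Hence if $\pnorm[\infty]{u} \le \delta_\eps$, then pointwise $|A(x,u(x)) - A^0(x)|\, |\nabla u(x)|^p \le \frac{\eps \alpha_0}{2}\, |\nabla u(x)|^p \le \frac{\eps}{2}\, A^0(x)\, |\nabla u(x)|^p$ by $(H_1)$ (recall $A^0(x) \ge \alpha_0$), so integrating gives $\frac1p \int_\Omega |A(x,u) - A^0(x)|\, |\nabla u|^p\, dx \le \frac{\eps}{2p}\, I^0(u)$.

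For the second term I would use \eqref{dah2}, i.e.\ $G^0(x,t)/|t|^p \to 0$ uniformly a.e.\ as $t \to 0$: given $\eps > 0$ there is $\delta_\eps' > 0$ such that $|G^0(x,t)| \le \frac{\eps \alpha_0}{2 \gamma_p^p}\, |t|^p$ for a.e.\ $x$ and all $|t| \le \delta_\eps'$, where $\gamma_p$ is the Sobolev constant in \eqref{sobqbis}. Then for $\pnorm[\infty]{u} \le \delta_\eps'$ we get $\int_\Omega |G^0(x,u)|\, dx \le \frac{\eps \alpha_0}{2 \gamma_p^p} \int_\Omega |u|^p\, dx = \frac{\eps \alpha_0}{2 \gamma_p^p}\, |u|_p^p \le \frac{\eps \alpha_0}{2}\, \|u\|^p$ by \eqref{sobqbis}, and since $\|u\|^p = \int_\Omega |\nabla u|^p\, dx \le \frac{1}{\alpha_0}\, I^0(u)$ (again by $(H_1)$ and $A^0 \ge \alpha_0$), this is $\le \frac{\eps}{2}\, I^0(u) \le \frac{\eps}{2p}\, I^0(u)$ since $p \ge 1$. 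Taking $r_\eps = \min\{\delta_\eps, \delta_\eps'\}$ and combining the two estimates via the triangle inequality yields $\abs{\J(u) - J^0(u)} \le \frac{\eps}{p}\, I^0(u)$, which is \eqref{4.1}.

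I do not expect a serious obstacle here: the statement is a routine quantitative smallness estimate near the origin, and the only mild subtlety is bookkeeping with the constants so that the $A$-term and the $G^0$-term each land below $\frac{\eps}{2p}\, I^0(u)$ (one may equally well split as $\frac{\eps}{p}\,\frac{\theta}{?}$ and $\frac{\eps}{p}(1-\theta)$ for any convenient split). The one point that deserves care is that $\pnorm[\infty]{u}$ finite is guaranteed for $u \in W^{1,p}_0(\Omega) = X$ precisely because $p > N$ (estimate \eqref{sobqter}), so the cut-off at level $r_\eps$ is meaningful; and that the uniform-in-$x$ nature of the limits \eqref{h4} and \eqref{dah2} is what lets the thresholds $\delta_\eps, \delta_\eps'$ be chosen independently of $x$, so the pointwise bounds integrate cleanly.
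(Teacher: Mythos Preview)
Your approach is essentially the paper's: start from \eqref{diff2} with $\sharp=0$, use \eqref{h4} for the $A$-term and \eqref{dah2} for the $G^0$-term, each bounded by $\frac{\eps}{2p}I^0(u)$. The only variant is that for the $G^0$-term the paper passes from $|u|_p^p$ to $I^0(u)$ via the eigenvalue inequality \eqref{stima} (taking $|G^0(x,t)| \le \frac{\eps \lambda^0_1}{2p}|t|^p$), while you use the Sobolev inequality \eqref{sobqbis} together with $\alpha_0\|u\|^p \le I^0(u)$. Both routes are fine.

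There is, however, an arithmetic slip at the end of your $G^0$-estimate: you write $\frac{\eps}{2}\, I^0(u) \le \frac{\eps}{2p}\, I^0(u)$ ``since $p \ge 1$'', but this inequality goes the wrong way (for $p>1$ one has $\frac{1}{2p} < \frac{1}{2}$). The fix is immediate: absorb the missing factor $p$ into the choice of $\delta_\eps'$, i.e.\ require $|G^0(x,t)| \le \frac{\eps \alpha_0}{2p\,\gamma_p^p}\,|t|^p$ for $|t| \le \delta_\eps'$, which \eqref{dah2} still permits. Then $\int_\Omega |G^0(x,u)|\,dx \le \frac{\eps}{2p}\,I^0(u)$ directly and the argument closes.
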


\begin{proof}
Fixing any $\eps > 0$, by \eqref{h4}, respectively \eqref{dah2}, there is a $r_\eps > 0$ such that
\[
|t| \le r_\eps\ \implies\ \abs{A(x,t) - A^0(x)} \le \frac{\eps \alpha_0}{2},
\quad \abs{G^0(x,t)} \le \frac{\eps \lambda^0_1}{2 p}\ |t|^p \quad \text{a.e. in } \Omega,
\]
where $\alpha_0$ is as in $(H_1)$. Then, if $\pnorm[\infty]{u} \le r_\eps$
from \eqref{diff2}, the estimates \eqref{lim3} and \eqref{stima} imply
\[
\abs{\J(u) - J^0(u)} \le \frac{\eps}{2 p}\ \alpha_0 \int_\Omega |\nabla u|^p dx +
\frac{\eps}{2 p}\ \lambda^0_1 \int_\Omega |u|^p dx
\le \frac{\eps}{p}\ I^0(u).\quad \QED
\]
\end{proof}

\begin{lemma} \label{Lemma 4.3}
Let ${\cal K}_\infty$ be a compact subset of $\M^\infty$.
Then, for any $\eps > 0$ there exists a constant $C_\eps = C({\cal K}_\infty,\eps) > 0$ such that
\begin{equation} \label{estiminfty}
\abs{\J(R u) - J^\infty(R u)}\ <\ \frac{\eps}{p}\, I^\infty(R u) + C_\eps\quad \hbox{for all $R \ge 0$, $u \in {\cal K}_\infty$.}
\end{equation}
\end{lemma}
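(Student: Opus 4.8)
The plan is to start from identity \eqref{diff2} with $\sharp = \infty$ evaluated at $Ru$: since $|\nabla(Ru)|^p = R^p\,|\nabla u|^p$ and, because $u \in \M^\infty$, $I^\infty(Ru) = R^p\,I^\infty(u) = R^p$, it suffices to bound
\[
\frac{R^p}{p}\int_\Omega \abs{A(x,Ru) - A^\infty(x)}\,|\nabla u|^p\,dx
\qquad\text{and}\qquad
\int_\Omega \abs{G^\infty(x,Ru)}\,dx
\]
each by $\frac{\eps}{2p}\,I^\infty(Ru)$ plus a constant that may depend on ${\cal K}_\infty$ and $\eps$. The second term is harmless: by \eqref{sug2} with a parameter $\eps' > 0$ and by $\lambda^\infty_1\,|u|_p^p \le I^\infty(u) = 1$ (see \eqref{stima}) it is at most $L_{\eps'}\meas(\Omega) + \eps'\,R^p/\lambda^\infty_1$, so a small choice of $\eps'$ absorbs the $R^p$-part into $\frac{\eps}{2p}R^p$ and leaves a constant.

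For the first integral the obstacle is that $(H_2)$ (in the form of Remark \ref{rem1}) only yields $\abs{A(x,Ru) - A^\infty(x)} < \eps'$ on $\set{|Ru| \ge R_{\eps'}}$, while on the complement we have only the crude bound $\abs{A(x,Ru) - A^\infty(x)} \le b + |A^\infty|_\infty =: M$ from \eqref{rm3}. The idea is to use the compactness of ${\cal K}_\infty$ through the claim
\[
\omega(\delta) := \sup_{u \in {\cal K}_\infty}\ \int_{\set{0 < |u| < \delta}} |\nabla u|^p\,dx \ \longrightarrow\ 0
\qquad \text{as } \delta \to 0^+ .
\]
I would prove this by contradiction: if $u_k \in {\cal K}_\infty$ and $\delta_k \downarrow 0$ were such that these integrals stay bounded away from $0$, then (by compactness) a subsequence satisfies $u_k \to \bar u$ in $W^{1,p}_0(\Omega)$, hence $|\nabla u_k|^p \to |\nabla\bar u|^p$ in $L^1(\Omega)$; splitting $\set{0<|u_k|<\delta_k}$ according to whether $\bar u = 0$, $0 < |\bar u| < \eta$, or $|\bar u| \ge \eta$, the first piece contributes at most $\|u_k - \bar u\|^p \to 0$ (since $\nabla\bar u = 0$ a.e.\ on $\set{\bar u = 0}$), the third has measure tending to $0$ and is killed by the uniform integrability of $\{|\nabla u_k|^p\}$, and the second is at most $\int_{\set{0<|\bar u|<\eta}}|\nabla u_k|^p\,dx \to \int_{\set{0<|\bar u|<\eta}}|\nabla\bar u|^p\,dx$, which is arbitrarily small with $\eta$; hence the limit is $0$, a contradiction.

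Granting the claim, I would discard $\set{u = 0}$ (where $\nabla u = 0$ a.e.) and write $\Omega = \set{|u| \ge \delta} \cup \set{0 < |u| < \delta}$. On $\set{0 < |u| < \delta}$ the crude bound gives a contribution $\le \frac{M}{p}\,R^p\,\omega(\delta)$. On $\set{|u| \ge \delta}$, where $|Ru| \ge R\delta$: if $R\delta \ge R_{\eps'}$ then $\abs{A(x,Ru) - A^\infty(x)} < \eps'$ there, and since $\int_\Omega |\nabla u|^p\,dx \le 1/\alpha_0$ on $\M^\infty$ (by \eqref{lim3}) this part is $\le \eps'\,R^p/(p\alpha_0)$; if instead $R\delta < R_{\eps'}$ then $R < R_{\eps'}/\delta$, so the part is bounded by the constant $M\,(R_{\eps'}/\delta)^p/(p\alpha_0)$. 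Choosing first $\delta = \delta({\cal K}_\infty,\eps)$ so small that $M\,\omega(\delta)$ is small, and then $\eps' = \eps'(\eps)$ small (which in turn fixes $R_{\eps'}$ and $L_{\eps'}$), the $R^p$-coefficients add up to at most $\eps/p$, while the remaining purely constant terms assemble into $C_\eps := M\,(R_{\eps'}/\delta)^p/(p\alpha_0) + L_{\eps'}\meas(\Omega)$, which depends on ${\cal K}_\infty$ and $\eps$ as required; running the argument with $\eps/2$ in place of $\eps$ turns ``$\le$'' into the strict inequality of the statement.

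The step I expect to be the real obstacle is the claim $\omega(\delta) \to 0$. Compactness of ${\cal K}_\infty$ is genuinely needed here, since scalar multiples of $\dist(\cdot,\partial\Omega)$ show that $\int_{\set{|u|<\delta}}|\nabla u|^p\,dx$ need not be $o(\delta^p)$; consequently an estimate that tried to absorb everything into $\frac{\eps}{p}\,I^\infty(Ru)$ would fail, and it is precisely the freedom of an additive constant $C_\eps$ — exploited on the part of $\set{|u|\ge\delta}$ where $R$ is forced to stay bounded — that makes the argument close.
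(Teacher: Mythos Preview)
Your proof is correct and follows essentially the same approach as the paper's: the paper also reduces the estimate on the $A$-term to the claim $\sup_{u\in{\cal K}_\infty}\int_{\{|u|<\rho\}}|\nabla u|^p\,dx\to 0$ (its Step~(a), proved by the same compactness/contradiction idea), then splits $\Omega$ along $\{|u|\ge\rho\}$ and $\{|u|<\rho\}$ and distinguishes large $R$ (where $(H_2)$ applies on $\{|u|\ge\rho\}$) from bounded $R$ (where the crude bound yields a constant), handling the $G^\infty$-term via \eqref{sug2} and \eqref{stima} exactly as you do. The only cosmetic difference is that the paper packages the passage to large $R$ as a separate intermediate Step~(b), whereas you merge (b) and (c) into one argument.
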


\begin{proof}
We organize the proof in different steps:
\begin{itemize}
\item[{\sl (a)}] if ${\cal K}$ is a compact subset of $W^{1,p}_0(\Omega)$,
taking any $\eps > 0$ there exists $\rho_\eps = \rho({\cal K},\eps) > 0$ such that
\[
\int_{\Omega^u_{\rho_\eps}} |\nabla u|^p dx \ < \ \eps\quad \hbox{for all $u \in {\cal K}$,}
\quad \hbox{with $\Omega^u_{\rho_\eps} = \{x \in \Omega: |u(x)| < \rho_\eps\}$;}
\]
\item[{\sl (b)}] if ${\cal K}$ is a compact subset of $W^{1,p}_0(\Omega)$,
taking any $\eps > 0$ there exists $R_\eps^* = R^*({\cal K},\eps) > 0$ such that
\[
\abs{\int_{\Omega} (A(x,Ru) - A^\infty(x))\ |\nabla u|^p dx} \ < \ \eps\quad \hbox{for all $R \ge R_\eps^*$, $u \in {\cal K}$;}
\]
\item[{\sl (c)}] if ${\cal K}_\infty$ is a compact subset of $\M^\infty$,
taking any $\eps > 0$ a constant $C_\eps = C({\cal K}_\infty,\eps) > 0$ exists
such that the estimate \eqref{estiminfty} holds.
\end{itemize}
{\sl Step (a)} Firstly, we claim that for any $u \in W^{1,p}_0(\Omega)$ and $\eps > 0$
there exists $r_\eps > 0$ such that
\begin{equation}\label{u1}
\int_{\Omega^u_{r_\eps}} |\nabla u|^p dx \ < \ \eps.
\end{equation}
In fact, the monotonicity property of the Lebesgue
integral implies
\begin{equation}\label{u2}
\lim_{r \to 0} \int_{\Omega^u_{r}} |\nabla u|^p dx \ = \ \int_{\Omega^u_{0}} |\nabla u|^p dx,
\quad \hbox{with $\Omega^u_0 = \{x \in \Omega: u(x)= 0\}$,}
\end{equation}
where
\begin{equation}\label{u3}
\int_{\Omega^u_{0}} |\nabla u|^p dx \ =\ 0
\end{equation}
not only if $\meas(\Omega^u_0) = 0$ but also if $\meas(\Omega^u_0) > 0$
as it is known that
\[
\meas(\{x \in \Omega: u(x) = 0, \nabla u(x) \ne 0\})\ =\ 0
\]
(see, e.g., \cite[Ex. 17, pp. 292]{Eva}).
Whence, \eqref{u1} follows from \eqref{u2} and \eqref{u3}.\\
Now, arguing by contradiction, assume that for the compact ${\cal K}$
the thesis in {\sl Step (a)} does not hold; hence, there exist a constant $\bar \eps > 0$
and a sequence $(u_n)_n \subset {\cal K}$ such that
\begin{equation}\label{u4}
\int_{\Omega_n} |\nabla u_n|^p dx \ \ge \ \bar\eps\quad \hbox{for all $n \ge 1$,}
\quad \hbox{with $\Omega_n = \{x \in \Omega: |u_n(x)| < \frac1n\}$.}
\end{equation}
As ${\cal K}$ is compact, then $\bar u \in {\cal K}$ exists
such that, up to subsequences,
\begin{equation}\label{u7}
\|u_n - \bar u\| \to 0, \quad \hbox{and so}\quad u_n \to \bar u\;\ \hbox{a.e. in $\Omega$.}
\end{equation}
Now, taking $\eps < \bar\eps$, from \eqref{u1} applied to $\bar u$, there exists
$\bar r > 0$ such that
\[
\int_{\Omega^{\bar{u}}_{\bar{r}}} |\nabla \bar{u}|^p dx \ < \ \frac{\eps}{2}.
\]
Then, taking a $\rho < \bar{r}$, if $n$ is large enough,
not only we have $\Omega_n \subset \Omega^{u_n}_{\rho}$
but also from \eqref{u7} it follows that
\[
\int_{\Omega^{u_n}_{\rho}} |\nabla u_n|^p dx\ <\ \eps
\]
in contradiction with \eqref{u4}. \\
{\sl Step (b)} For the compacteness of ${\cal K}$, a constant $\gamma_{\cal K} > 0$ exists
such that
\begin{equation}\label{stima2}
\|u\|^p\ \le\ \gamma_{\cal K}\quad \hbox{for all $u \in {\cal K}$.}
\end{equation}
Furthermore, taking $\eps > 0$, let $\rho_\eps > 0$ be as in {\sl Step (a)} so that
\begin{equation}\label{u9}
\int_{\Omega^u_{\rho_\eps}} |\nabla u|^p dx \ < \ \frac{\eps}{2 (b + |A^\infty|_\infty)}\qquad \hbox{for all $u \in {\cal K}$,}
\end{equation}
where $b > 0$ is as in \eqref{rm3}.
On the other hand,  from $(H_2)$, a constant $\sigma_\eps > 0$ exists such that
\begin{equation}\label{u10}
\abs{A(x,t) - A^\infty(x)} \ < \ \frac{\eps}{2 \gamma_{\cal K}}\quad \hbox{for a.e. $x \in \Omega$, if $|t| \ge \sigma_\eps$,}
\end{equation}
then, taking $R_\eps^* = \frac{\sigma_\eps}{\rho_\eps}$,
for all $u \in {\cal K}$, $R \ge R_\eps^*$, from \eqref{rm3}, \eqref{stima2} -- \eqref{u10} we have
\[\begin{split}
&\abs{\int_{\Omega} (A(x,Ru) - A^\infty(x))\ |\nabla u|^p dx} \ \le \
\int_{\Omega^u_{\rho_\eps}} (|A(x,Ru)| + |A^\infty(x)|)\ |\nabla u|^p dx\\
&\qquad +\ \int_{\Omega\setminus\Omega^u_{\rho_\eps}} |A(x,Ru) - A^\infty(x)|\ |\nabla u|^p dx \ <\ \eps.
\end{split}
\]
{\sl Step (c)} Consider ${\cal K}_\infty$, compact subset of $\M^\infty$, and take any $\eps > 0$.\\
Firstly, let us remark that, by \eqref{sug2}, there is a $L_\eps > 0$ such that
\[
\abs{G^\infty(x,t)}\ \le\ \frac{\eps \lambda^\infty_1}{2 p}\, |t|^p + L_\eps \quad \text{for a.a. } x \in \Omega \text{ and all } t \in \R.
\]
Hence, \eqref{stima} implies that
\begin{equation}\label{onG}
\abs{\int_\Omega G^\infty(x,u)\, dx}\ \le\
 \frac{\eps}{2 p}\, I^\infty(u) + L_\eps \meas(\Omega)\quad\hbox{for all $u \in W^{1,p}_0(\Omega)$.}
\end{equation}
Now, taking $u \in {\cal K}_\infty$, from {\sl Step (b)} applied to ${\cal K}_\infty$
and $\frac{\eps}2$, a constant $R^* > 0$ exists such that two cases may occur.\\
If $R \le R^*$, then \eqref{rm3}, \eqref{lim3}, \eqref{diff2} and \eqref{onG} imply that
\begin{equation}\label{lower}
\abs{\J(R u) - J^\infty(R u)}\ \le\ \frac{(R^*)^p}{p}\ (b + |A^\infty|_\infty) \gamma_{{\cal K}_\infty}\ +\
\frac{\eps}{2p}\, I^\infty(R u) + L_\eps \meas(\Omega).
\end{equation}
On the contrary, if $R > R^*$, then by \eqref{diff2}, \eqref{onG} and {\sl Step (b)}, as
${\cal K}_\infty \subset \M^\infty$, it follows
\begin{equation}\label{upper}
\begin{split}
&\abs{\J(R u) - J^\infty(R u)}\ \le\  R^p\ \frac{\eps}{2 p}\ +\
\frac{\eps}{2p}\ I^\infty(R u) + L_\eps \meas(\Omega)\\
&\qquad =\  R^p\ \frac{\eps}{2 p}\ I^\infty(u)\ +\
\frac{\eps}{2p}\ I^\infty(R u) + L_\eps \meas(\Omega)\\
&\qquad =\ \frac{\eps}{p}\ I^\infty(R u) + L_\eps \meas(\Omega).
\end{split}
\end{equation}
Thus, \eqref{estiminfty} follows from \eqref{lower} and \eqref{upper}
if we choose $C_\eps > 0$ large enough.
\end{proof}

Now, we are ready to prove Theorem \ref{Theorem 4.1}.

\begin{proof}[Proof of Theorem \ref{Theorem 4.1}]
Firstly, we note that by Proposition \ref{PScondition}
$\J$ satisfies the $(PS)_\beta$ condition for all $\beta \in \R$.\\
Then, we split the proof in two steps.\\
{\sl (i)} Case $l > m$. $\; $
Let $\A$ denote the class of symmetric subsets of $W^{1,p}_0(\Omega) \setminus \set{0}$
and
\[
\A_k\ =\ \bgset{M \in \A : M \text{ is compact and } i(M) \ge k}.
\]
Set
\[
c_k := \inf_{M \in \A_k}\, \max_{u \in M}\, \J(u), \quad m + 1 \le k \le l.
\]
We will show that $- \infty < c_{m + 1} \le \dotsb \le c_l < 0$, so we can
apply Theorem \ref{Theorem 2.4}.\\
In order to see that $c_l < 0$, let $\eps > 0$ be so small that $(1 + \eps)(\lambda^0_l + \eps) < \lambda^0$.
Then, there is $M_0 \in \F^0_l$ such that $\Psi(u) \le \lambda^0_l + \eps$ for all $u \in M_0$.
Let $r = \frac{r_\eps}{\gamma_\infty}$ with $r_\eps$ as in Lemma \ref{Lemma 4.2} and $\gamma_\infty$ as in \eqref{sobqter}, and let
\[
\tilde{M}_0\ =\ \bgset{v\ =\ r \frac{u}{\norm{u}} :\ u \in M_0}.
\]
As the map $u \in M_0 \mapsto v \in \tilde{M}_0$ is an odd homeomorphism,
then $\tilde{M}_0$ is compact and $i(\tilde{M}_0) = i(M_0) \ge l$ by \ref{i2}, so
$\tilde{M}_0 \in \A_l$. By \eqref{diff21} and \eqref{4.1},
for any $v \in \tilde{M}_0$ we have
\begin{eqnarray*}
\J(v) &\le& J^0(v) + \frac{\eps}{p}\, I^0(v)\ =\ \frac1p \left((1 + \eps)\, I^0(v) -
\lambda^0 |v|_p^p\right)\\[5pt]
&=& \frac{r^p}{p\, \norm{u}^p} \left((1 + \eps)\, I^0(u) -
\lambda^0 |u|_p^p\right)\
\le\ \frac{r^p}{p\, \norm{u}^p} \left(1 + \eps - \frac{\lambda^0}{\lambda^0_l + \eps}\right) < 0;
\end{eqnarray*}
so $c_l < 0$.\\
For seeing that $c_{m + 1} > - \infty$, take any
$M_\infty \in \A_{m + 1}$ and let $\eps > 0$ be so small that
$(1 - \eps)\, \lambda^\infty_{m + 1} \ge \lambda^\infty$. Then, consider
\[
\tilde{M}_\infty\ =\ \bgset{u = v/[I^\infty(v)]^{1/p} :\, v \in M_\infty}\ \subset\ \M^\infty.
\]
As the map $v \in M_\infty \mapsto u \in \tilde{M}_\infty$ is an odd homeomorphism,
then $\tilde{M}_\infty$ is compact and $i(\tilde{M}_\infty) = i(M_\infty) \ge m + 1$ by \ref{i2}.
So, $\tilde{M}_\infty \in \F^\infty_{m + 1}$; hence,
\[
\max_{u \in \tilde{M}_\infty}\, \Psi(u) \ge \lambda^\infty_{m + 1}.
\]
Now, let $C_\eps$ be as in Lemma \ref{Lemma 4.3} with ${\cal K}_\infty = M_\infty$.
By \eqref{diff21} and \eqref{estiminfty}, for any $v \in M_\infty$, it results
\begin{eqnarray*}
\J(v) &\ge& J^\infty(v) - \frac{\eps}{p}\, I^\infty(v) - C_\eps\
=\ \frac1p \left((1 - \eps)\, I^\infty(v) - \lambda^\infty |v|_p^p\right) - C_\eps\\[5pt]
&=& \frac{I^\infty(v)}{p} \left(1 - \eps - \lambda^\infty |u|_p^p\right) - C_\eps,
\end{eqnarray*}
with $I^\infty(v) \ge 0$. Whence,
\[
\max_{v \in M_\infty} \J(v)\ \ge\ - C_\eps;
\]
thus $c_{m + 1} \ge - C_\eps$.\\
{\sl (ii)} Case $l < m$. $\; $
Let $\A^\ast$ denote the class of symmetric subsets of $W^{1,p}_0(\Omega)$,
$\Gamma$ the group of odd homeomorphisms $\gamma$ of $W^{1,p}_0(\Omega)$ such that
$\restr{\gamma}{\{\J \le 0\}}$ is the identity,
and $i^\ast$ the pseudo-index related to $i$, $\bdry{B^W_r(0)}$, and $\Gamma$,
where $W = W^{1,p}_0(\Omega)$. Then, let
\[
\A_k^\ast\ =\ \bgset{M \in \A^\ast :\, M \text{ is compact and } i^\ast(M) \ge k}
\]
and set
\[
c_k^\ast := \inf_{M \in \A_k^\ast}\, \max_{u \in M}\, \J(u), \quad l + 1 \le k \le m.
\]
We will show that $0 < c_{l + 1}^\ast \le \dotsb \le c_m^\ast < + \infty$ if $r > 0$ is sufficiently small,
and then we can apply Theorem \ref{Theorem 2.5}.\\
In order to see that $c_{l + 1}^\ast > 0$, fix $\eps > 0$ so small that $(1 - \eps)\, \lambda^0_{l + 1} > \lambda^0$,
define $r = \frac{r_\eps}{\gamma_\infty}$ with $r_\eps$ as in Lemma \ref{Lemma 4.2}
and $\gamma_\infty$ as in \eqref{sobqter}, take any $M^*_0 \in \A_{l + 1}^\ast$, and consider
\[
\tilde{M}^*_0\ =\ \bgset{u\ =\ \frac{v}{[I^0(v)]^{1/p}} :\ v \in M^*_0 \cap \bdry{B^W_r(0)}} \subset \M^0.
\]
The map $v \in M^*_0 \cap \bdry{B^W_r(0)} \mapsto u \in \tilde{M}^*_0$ is an odd homeomorphism;
hence, $\tilde{M}^*_0$ is compact and
\[
i(\tilde{M}^*_0) = i(M^*_0 \cap \bdry{B^W_r(0)}) \ge i^\ast(M^*_0) \ge l + 1
\]
by \ref{i2}. So $\tilde{M}^*_0 \in \F^0_{l + 1}$ and hence
\[
\max_{u \in \tilde{M}^*_0}\, \Psi(u) \ge \lambda^0_{l + 1}.
\]
By \eqref{diff21} and \eqref{4.1}, for any $v \in M^*_0 \cap \bdry{B^W_r(0)}$ we have
\[
\J(v)\ \ge\ J^0(v) - \frac{\eps}{p}\, I^0(v) = \frac1p \left((1 - \eps)\, I^0(v) -
\lambda^0 |v|_p^p\right)\ =\ \frac{I^0(v)}{p} \left(1 - \eps - \lambda^0 |u|_p^p\right).
\]
Since $I^0(v) \ge \alpha_0 \norm{v}^p$, it results
\[
\delta := \inf_{v \in \bdry{B^W_r(0)}}\, I^0(v) \ge \alpha_0\, r^p > 0.
\]
Whence, it follows that
\[
\max_{v \in M^*_0}\ \J(v)\ \ge\ \max_{v \in M^*_0 \cap \bdry{B^W_r(0)}}\ \J(v)\ \ge\
\frac{\delta}{p} \left(1 - \eps - \frac{\lambda^0}{\lambda^0_{l + 1}}\right) > 0;
\]
so $c_{l + 1}^\ast > 0$.\\
For proving that $c_m^\ast < + \infty$, let $\eps > 0$ be so small that $(1 + \eps)(\lambda^\infty_m + \eps) < \lambda^\infty$.
There is a $M^*_\infty \in \F^\infty_m$ such that $\Psi(u) \le \lambda^\infty_m + \eps$ for all $u \in M^*_\infty$.
Let $C_\eps$ be as in Lemma \ref{Lemma 4.3} with ${\cal K}_\infty = M^*_\infty$ and consider
\[
\tilde{M}^*_R\ = \ \bgset{v = Ru :\ u \in M^*_\infty}, \quad R > 0.
\]
The map $u \in M^*_\infty \mapsto v \in \tilde{M}^*_R$ is an odd homeomorphism;
hence, $\tilde{M}^*_R$ is compact and $i(\tilde{M}^*_R) = i(M^*_\infty) \ge m$ by \ref{i2}.
By \eqref{diff21} and \eqref{estiminfty}, for any $v \in \tilde{M}^*_R$ we have
\begin{eqnarray*}
\J(v) &\le& J^\infty(v) + \frac{\eps}{p}\, I^\infty(v) + C_\eps \
=\ \frac1p \left((1 + \eps)\, I^\infty(v) -
\lambda^\infty |v|_p^p\right) + C_\eps\\[5pt]
&=& \frac{R^p}{p} \left((1 + \eps)\, I^\infty(u) - \lambda^\infty |u|_p^p\right) + C_\eps\
 \le\ \frac{R^p}{p} \left(1 + \eps - \frac{\lambda^\infty}{\lambda^\infty_m + \eps}\right) + C_\eps.
\end{eqnarray*}
Fixing $R$ so large that the last term of the previous estimates is $\le 0$, consider
\[
\tilde{M}^*_\infty\ =\ \bgset{tv :\ v \in \tilde{M}^*_R,\, t \in [0,1]} \in \A^\ast.
\]
Since $\tilde{M}^*_R$ is compact, so is $\tilde{M}^*_\infty$.
Since $\J(v) \le 0$ on $\tilde{M}^*_R$, for any $\gamma \in \Gamma$
it results $\restr{\gamma}{\tilde{M}^*_R}$ is the identity. Thus, by
applying the piercing property \ref{i7} to
\[
\begin{split}
&A = \tilde{M}^*_R, \quad A_0 = B^W_r(0), \quad A_1 = \closure{W^{1,p}_0(\Omega) \setminus B^W_r(0)},\\[5pt]
&\varphi : (v,t)\in A \times [0,1] \mapsto \gamma(tv) \in A_0 \cup A_1
\end{split}
\]
($r$ as in the first part of the proof of this case), we have
\[
i(\gamma(M^*_\infty) \cap \bdry{B^W_r(0)}) = i(\varphi(A \times [0,1]) \cap A_0 \cap A_1)
\ge i(A) = i(\tilde{M}^*_R) \ge m.
\]
So $i^\ast(\tilde{M}^*_\infty) \ge m$, and hence $\tilde{M}^*_\infty \in \A_m^\ast$. Then,
\[
c_m^\ast \le \max_{u \in \tilde{M}^*_\infty}\, \J(u) < + \infty. \hfill\QED
\]
\end{proof}


\end{document}